\tikzset{
  commutative diagrams/.cd, 
  arrow style=tikz, 
  diagrams={>=stealth}
}
\addspace\texttt{\mkbibbrackets{\thefield{arxivclass}}}}}}
\addspace\texttt{\mkbibbrackets{\thefield{arxivclass}}}}}}
\newcommand{\printreferences}{\printbibliography[heading=bibintoc]}
\ifundef{\abstract}{}{\patchcmd{\abstract}%
    {\quotation}{\quotation\noindent\ignorespaces}{}{}}
\numberwithin{equation}{section}
\renewcommand{\eqref}[1]{\hyperref[#1]{\rm(\ref*{#1})}}
\def\makeautorefname#1#2{\AtBeginDocument{\expandafter\def\csname#1autorefname\endcsname{#2}}}
\newcommand{\mynewtheorem}[2]{
  \newaliascnt{#1}{equation}          
  \newtheorem{#1}[#1]{#2}
  \aliascntresetthe{#1}
  \makeautorefname{#1}{#2}
}
\newtheorem*{axiom*}{Axiom}
\newtheorem*{theorem*}{Theorem}
\newtheorem*{prop*}{Proposition}
\newtheorem*{conjecture*}{Conjecture}
\newtheorem{step}{Step}
\newtheorem{substep}{Step}
\numberwithin{substep}{step}
\numberwithin{subcase}{case}
\theoremstyle{remark}
\newtheorem*{remark*}{Remark}
\newtheorem*{convention*}{Convention}
\newtheorem*{conventions*}{Conventions}
\theoremstyle{definition}
\newtheorem*{definition*}{Definition}
\newtheorem*{example*}{Example}
\newtheorem*{question*}{Question}
\DeclareFontFamily{U}{mathx}{\hyphenchar\font45}
\DeclareFontShape{U}{mathx}{m}{n}{
      <5> <6> <7> <8> <9> <10>
      <10.95> <12> <14.4> <17.28> <20.74> <24.88>
      mathx10
      }{}
\DeclareSymbolFont{mathx}{U}{mathx}{m}{n}
\DeclareMathAccent{\widecheck}{0}{mathx}{"71}
\DeclareMathAccent{\wideparen}{0}{mathx}{"75}
\DeclareMathOperator{\HF}{\HF}
\DeclareMathOperator{\Hol}{Hol}
\DeclareMathOperator{\im}{im}
\DeclareMathOperator{\ind}{index}
\DeclareMathOperator{\res}{res}
\DeclareMathOperator{\spec}{spec}
\def\({\left(}
\def\){\right)}
\def\<{\left\langle}
\def\>{\right\rangle}
\newcommand{\C}{{\mathbf{C}}}
\newcommand{\Gtwo}{G_2}
\newcommand{\N}{{\mathbf{N}}}
\newcommand{\PU}{{\P\U}}
\newcommand{\R}{\mathbf{R}}
\newcommand{\U}{\mathrm{U}}
\newcommand{\Z}{\mathbf{Z}}
\newcommand{\andq}{\text{and}\quad}
\newcommand{\co}{\mskip0.5mu\colon\thinspace}
\newcommand{\defined}[2][\key]{\def\key{#2}\textbf{#2}\index{#1}}
\newcommand{\delbar}{\bar{\del}}
\newcommand{\del}{\partial}
\newcommand{\floor}[1]{\lfloor#1\rfloor}
\newcommand{\into}{\hookrightarrow}
\newcommand{\iso}{\cong}
\newcommand{\loc}{\mathrm{loc}}
\newcommand{\qandq}{\quad\text{and}\quad}
\newcommand{\sEnd}{\mathrm{\sE nd}}
\renewcommand{\Im}{\operatorname{Im}}
\renewcommand{\P}{\mathbf{P}}
\renewcommand{\Re}{\operatorname{Re}}
\renewcommand{\det}{\operatorname{det}}
\renewcommand{\epsilon}{\varepsilon}
\renewcommand{\setminus}{{\backslash}}
\renewcommand{\leq}{\leqslant}
\renewcommand{\geq}{\geqslant}
\renewcommand*\env@matrix[1][*\c@MaxMatrixCols c]{%
  \hskip -\arraycolsep
  \let\@ifnextchar\new@ifnextchar
  \array{#1}}
\renewcommand\xleftrightarrow[2][]{%
  \ext@arrow 9999{\longleftrightarrowfill@}{#1}{#2}}
\newcommand\longleftrightarrowfill@{%
  \arrowfill@\leftarrow\relbar\rightarrow}
\newcommand{\rd}{{\rm d}}
\newcommand{\ua}{{\underline a}}
\newcommand{\ub}{{\underline b}}
\newcommand{\cA}{\mathcal{A}}
\newcommand{\cH}{\mathcal{H}}
\newcommand{\cT}{\mathcal{T}}
\newcommand{\sE}{\mathscr{E}}
\newcommand{\sG}{\mathscr{G}}
\newcommand{\sM}{\mathscr{M}}
\newcommand{\fg}{{\mathfrak g}}
\newcommand{\fr}{{\mathfrak r}}
\newcommand{\fu}{{\mathfrak u}}
\author{
  Henrique Sá Earp
  \and
  Thomas Walpuski}
\title{$\Gtwo$--instantons over twisted connected sums}
\date{2015-11-03}
\begin{document}

\maketitle

\begin{abstract}
  We introduce a method to construct $\Gtwo$--instantons over compact $\Gtwo$--manifolds arising as the twisted connected sum of a matching pair of building blocks \cites{Kovalev2003,Kovalev2011,Corti2012a}.
  Our construction is based on gluing $\Gtwo$--instantons obtained from holomorphic vector bundles over the building blocks via the first named author's work \cite{SaEarp2011}.
  We require natural compatibility and transversality conditions which can be interpreted in terms of certain Lagrangian subspaces of a moduli space of stable bundles on a $K3$ surface.
\end{abstract}

\paragraph{Changes to the published version}
This article was first published in \href{http://dx.doi.org/10.2140/gt.2015.19.1263}{
Geometry and Topology, Volume 19, Issue 3, pp. 1263–1285 (2015)}.
The present version differs only in aesthetic aspects and \autoref{Remark_Examples} has been updated to reflect recent progress.

\section{Introduction}
\label{sec:intro}

A $\Gtwo$--manifold $(Y,g)$ is a Riemannian $7$--manifold whose holonomy group $\Hol(g)$ is contained in the exceptional Lie group $\Gtwo$ or, equivalently, a $7$--manifold $Y$ together with torsion-free $\Gtwo$--structure, that is, a non-degenerate $3$--form $\phi$ satisfying a certain non-linear partial differential equation, see, e.g., \cite[Part I]{Joyce1996}.
An important method to produce examples of compact  $\Gtwo$--manifolds with $\Hol(g)=\Gtwo$ is the \defined{twisted connected sum construction}, suggested by Donaldson, pioneered by \citet{Kovalev2003} and later extended and improved by \citet{Kovalev2011} and \citet{Corti2012a}.
Here is a brief summary of this construction:
A \defined{building block} consists of a projective $3$--fold $Z$ and a smooth anti-canonical $K3$ surface $\Sigma\subset Z$ with trivial normal bundle, see \autoref{def:building-block}.
Given a choice of hyperkähler structure $\(\omega_I,\omega_J,\omega_K\)$ on $\Sigma$ such that $\omega_J + i\omega_K$ is of type $(2,0)$ and $[\omega_I]$ is the restriction of a Kähler class on $Z$, one can make $V:=Z\setminus \Sigma$ into an asymptotically cylindrical (ACyl) Calabi--Yau $3$--fold, that is, a non-compact Calabi--Yau $3$--fold with a tubular end modelled on $\R_+\times S^1\times \Sigma$, see Haskins--Hein--Nordström~\cite{Haskins2012}.
Then $Y:=S^1\times V$ is an ACyl $\Gtwo$--manifold with a tubular end modelled on  $\R_+\times T^2\times \Sigma$.

\begin{definition}
  \label{def:matching-data}
  Given a pair of building blocks $(Z_\pm,\Sigma_\pm)$, a collection 
  \begin{equation*}
    \bm=\{\(\omega_{I,\pm},\omega_{J,\pm},\omega_{K,\pm}\),\fr\}
  \end{equation*}
  consisting of a choice of hyperkähler structures on $\Sigma_\pm$ such that $\omega_{J,\pm} + i\omega_{K,\pm}$ is of type $(2,0)$ and $[\omega_{I,\pm}]$ is the restriction of a Kähler class on $Z_\pm$ as well as a hyperkähler rotation $\fr\co\Sigma_+\to\Sigma_-$ is called \defined{matching data} and $(Z_\pm,\Sigma_\pm)$ are said to \defined{match} via $\bm$.
  Here a \defined{hyperkähler rotation} is a diffeomorphism $\fr\co \Sigma_+\to \Sigma_-$ such that
  \begin{equation}
    \label{eq:hyperkahler-rotation}
    \fr^*\omega_{I,-}=\omega_{J,+}, \quad
    \fr^*\omega_{J,-}=\omega_{I,+} \qandq
    \fr^*\omega_{K,-}=-\omega_{K,+}.
  \end{equation}
\end{definition} 

Given a matching pair of building blocks, one can glue $Y_\pm$ by interchanging the $S^1$--factors at infinity and identifying $\Sigma_\pm$ via $\fr$.
This yields a simply-connected compact $7$--manifold $Y$ together with a family of torsion-free $\Gtwo$--structures $(\phi_T)_{T \geq T_0}$, see \citet[Section 4]{Kovalev2003}.
From the Riemannian viewpoint $(Y,\phi_T)$ contains a ``long neck'' modelled on $[-T,T]\times T^2\times \Sigma_+$; one can think of the twisted connected sum as reversing the degeneration of the family of $\Gtwo$--manifolds that occurs as the neck becomes infinitely long.

If $(Z,\Sigma)$ is a building block and $\sE\to Z$ is a holomorphic vector bundle such that $\sE|_\Sigma$ is stable, then $\sE|_\Sigma$ carries a unique ASD instanton compatible with the holomorphic structure~\cite{Donaldson1985}.
The first named author showed that in this situation $\sE|_V$ can be given a Hermitian--Yang--Mills (HYM) connection asymptotic to the ASD instanton on $\sE|_\Sigma$ \cite{SaEarp2011}.
The pullback of a HYM connection over $V$ to $S^1\times V$ is a \defined{$\Gtwo$--instanton}, i.e., a connection $A$ on a $G$--bundle over a $\Gtwo$--manifold such that $F_A\wedge\psi=0$ with $\psi:=*\phi$.
It was pointed out by Simon Donaldson and Richard Thomas in their seminal article on gauge theory in higher dimensions \cite{Donaldson1998} that, formally, $\Gtwo$--instantons are rather similar to flat connections over $3$--manifolds; in particular, they are critical points of a Chern--Simons type functional and there is hope that counting them could lead to an enumerative invariant for $\Gtwo$--manifolds not unlike the Casson invariant for $3$--manifolds, see \cite[Section 6]{Donaldson2009} and \cite[Chapter 6]{Walpuski2013}.
The main result of this article is the following theorem, which gives conditions for a pair of such $\Gtwo$--instantons over $Y_\pm=S^1\times V_\pm$ to be glued to give a $\Gtwo$--instanton over $(Y,\phi_T)$.

\begin{theorem}
  \label{thm:itcs}
  Let $(Z_\pm,\Sigma_\pm)$ be a pair of building blocks that match via $\bm$.
  Denote by $Y$ the compact $7$--manifold and by $(\phi_T)_{T\geq T_0}$ the family of torsion-free $\Gtwo$--structures obtained from the twisted connected sum construction.
  Let $\sE_\pm\to Z_\pm$ be a pair of holomorphic vector bundles such that the following hold:
  \begin{itemize}
    \item $\sE_\pm|_{\Sigma_\pm}$ is stable.
      Denote the corresponding ASD instanton by $A_{\infty,\pm}$.

    \item There is a bundle isomorphism $\bar\fr\co\sE_+|_{\Sigma_+}\to \sE_-|_{\Sigma_-}$ covering the hyperkähler rotation $\fr$ such that $\bar\fr^*A_{\infty,-}=A_{\infty,+}$.
    
    \item There are no infinitesimal deformations of $\sE_\pm$ fixing the restriction to $\Sigma_\pm$:
    \begin{equation}
      \label{eq:no-deformations}
      H^1(Z_\pm,\sEnd_0(\sE_\pm)(-\Sigma_\pm))=0.
    \end{equation}

    \item 
      Denote by $\res_\pm\co H^1(Z_\pm,\sEnd_0(\sE_\pm)) \to H^1(\Sigma_\pm,\sEnd_0(\sE_\pm|_{\Sigma_\pm}))$ the restriction map and by
      \begin{equation*}
        \lambda_\pm\co
        H^1(Z_\pm,\sEnd_0(\sE_\pm))\to H^1_{A_{\infty,\pm}}
      \end{equation*}
      the composition of $\res_\pm$ with the isomorphism from \autoref{rmk:H1-ASD}.
      The images of $\lambda_+$ and $\bar\fr^*\circ\lambda_-$ intersect trivially in $H^1_{A_{\infty,+}}$:
    \begin{equation}
      \label{eq:trivial-intersection}
      \im\(\lambda_+\) \cap \im\(\bar\fr^*\circ\lambda_-\)=\{0\}.
    \end{equation}
  \end{itemize}
  Then there exists a non-trivial $\PU(n)$--bundle $E$ over $Y$, a constant $T_1\geq T_0$ and for each $T\geq T_1$ an irreducible and unobstructed\footnote{%
    See \autoref{def:irreducible-unobstructed}.
  }
  $\Gtwo$--instanton $A_T$ on $E$ over $(Y,\phi_T)$.
\end{theorem}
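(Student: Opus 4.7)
The proof proceeds by a gluing-and-perturbation scheme in the spirit of Taubes and Donaldson. First I would assemble the holomorphic bundles $\cE_\pm$ into a $\PU(n)$-bundle $E\to Y$, construct an approximate $\rG_2$-instanton on $E$, and then deform it into a genuine $\rG_2$-instanton via a quantitative implicit function theorem. The crux is a uniform invertibility statement for the linearized $\rG_2$-instanton operator as $T\to\infty$; this is where the two hypotheses \eqref{eq:no-deformations} and \eqref{eq:trivial-intersection} feed in directly.

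To construct the approximate solution, I would invoke \cite{SaEarp2011} to obtain HYM connections $A_\pm$ on $\cE_\pm|_{V_\pm}$ exponentially asymptotic to $A_{\infty,\pm}$ on the cylindrical end; their pullbacks to $Y_\pm=S^1\times V_\pm$ are $\rG_2$-instantons invariant along the external $S^1$-factor at infinity. The identification $\bar\fr^*A_{\infty,-}=A_{\infty,+}$ allows me to glue the projectivised bundles across the neck $[-T,T]\times T^2\times \Sigma_+$ to form $E\to Y$, and a cut-off interpolation in Coulomb gauge with $\bar\fr^*A_{\infty,-}$ yields an approximate connection $A_T^{\mathrm{app}}$ whose defect satisfies $\|F_{A_T^{\mathrm{app}}}\wedge\psi_T\|\leq C e^{-\delta T}$ for some uniform $\delta>0$, measured in a suitable weighted Sobolev norm.

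The analytic heart of the argument is the invertibility of the linearized operator
\begin{equation*}
L_T\co \Omega^0(Y,\ad E)\oplus\Omega^1(Y,\ad E) \longrightarrow \Omega^0(Y,\ad E)\oplus\Omega^1(Y,\ad E)
\end{equation*}
obtained by coupling the linearised $\rG_2$-instanton equation with Coulomb gauge fixing. I would produce a right inverse $Q_T$ of norm bounded uniformly in $T$ by patching right inverses built on each half via weighted Fredholm theory on asymptotically cylindrical $\rG_2$-manifolds. On $Y_\pm$ the potential cokernel splits into two parts: a rapidly decaying part, identified through a Hodge-theoretic comparison with $H^1(Z_\pm,\cEnd_0(\cE_\pm)(-\Sigma_\pm))$, which vanishes by \eqref{eq:no-deformations}, and a bounded asymptotic part recording the limiting value at the neck, which takes values in $H^1_{A_{\infty,\pm}}$ and is governed by the images of $\lambda_\pm$. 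The latter cokernel vanishes for the glued problem precisely when the two boundary-value subspaces intersect trivially in $H^1_{A_{\infty,+}}$, which is exactly \eqref{eq:trivial-intersection}.

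With $Q_T$ in hand, the $\rG_2$-instanton equation for $A = A_T^{\mathrm{app}}+a$ reduces to a fixed point equation of the schematic form
\begin{equation*}
a = -Q_T\bigl(F_{A_T^{\mathrm{app}}}\wedge\psi_T + [a,a]\wedge\psi_T\bigr).
\end{equation*}
Since the approximate error is exponentially small while $\|Q_T\|$ is uniformly bounded, the contraction mapping principle yields a unique small solution $a$ for all $T\geq T_1$, producing the desired $\rG_2$-instanton $A_T$. Irreducibility follows from the irreducibility of $A_\pm$ (ensured by stability of $\cE_\pm|_{\Sigma_\pm}$) together with smallness of the perturbation, and unobstructedness is just the injectivity of $L_T$ established above. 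The principal difficulty in the scheme is precisely the uniform invertibility step: constructing $Q_T$ demands a delicate parametrix gluing on the ACyl pieces with exponential weights tuned to the indicial roots at zero, and it is exactly here that both cohomological hypotheses are indispensable, the first to kill the interior obstruction on each side and the second to kill the asymptotic matching obstruction at the neck.
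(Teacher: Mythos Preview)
Your proposal is correct and follows the same overall gluing-and-perturbation scheme as the paper: invoke S\'a~Earp's theorem (Theorem~\ref{thm:saearp}) for the HYM connections on $V_\pm$, pull back to $S^1\times V_\pm$, cut-and-paste to an approximate instanton with exponentially small defect, invert the linearisation, and close with a contraction. Your reading of the hypotheses is exactly right and matches the paper's Proposition~\ref{prop:diagram}: condition~\eqref{eq:no-deformations} forces $\ker\iota_\pm=0$ via the injection $\cT_{\pi_V^*A_\pm,-\delta}\hookrightarrow H^1(Z_\pm,\cEnd_0(\cE_\pm)(-\Sigma_\pm))$, and \eqref{eq:trivial-intersection} becomes the transversality hypothesis~\eqref{eq:trivial-intersection-2} of the abstract gluing theorem (Theorem~\ref{thm:gluing}).

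The one place your sketch diverges is the linear estimate. You propose a parametrix-patching construction of a right inverse $Q_T$ with norm bounded \emph{uniformly} in $T$, in weighted spaces. The paper instead stays in unweighted H\"older norms and proves only the weaker bound $\|L_T^{-1}\|\leq c\,e^{|\delta|T/4}$ by a contradiction argument: assume a normalised sequence with $L_{T_i}\ua_i\to 0$, extract bounded limits $\ua_{\infty,\pm}\in\cT_{A_\pm}$ on each half, observe that $\iota_+(\ua_{\infty,+})=\bar f^*\iota_-(\ua_{\infty,-})$, and use injectivity of $\iota_\pm$ together with~\eqref{eq:trivial-intersection-2} to force $\ua_{\infty,\pm}=0$. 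This non-uniform bound is harmless because the defect is $O(e^{-|\delta|T})$, so the contraction still closes. Your route would buy a cleaner estimate at the cost of more weighted-space bookkeeping; the paper's route is quicker but needs the auxiliary Proposition~\ref{prop:uniform-convergence} to upgrade local convergence on the halves to uniform convergence across the lengthening neck.
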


\begin{remark}
  \label{rmk:H1-ASD}
  If $A$ is an ASD instanton on a $\PU(n)$--bundle $E$ over a Kähler surface $\Sigma$ corresponding to a holomorphic vector bundle $\sE$, then
  \begin{equation*}
    H^1_A := \ker\(\rd_A^*\oplus\rd_A^+\co\Omega^1(\Sigma,\fg_E)\to(\Omega^0\oplus\Omega^+)(\Sigma,\fg_E)\)\iso H^1(\Sigma,\sEnd_0(\sE)),
  \end{equation*}
  see \citet[Section 6.4]{Donaldson1990}.
  Here $\fg_E$ denotes the adjoint bundle associated with $E$.
\end{remark}

\begin{remark}
  If 
  \begin{equation}
    \label{eq:H1=0}
    H^1(\Sigma_+,\sEnd_0(\sE_+|_{\Sigma_+}))=\{0\},
  \end{equation}
  then \eqref{eq:trivial-intersection} is vacuous.
  If, moreover, the topological bundles underlying $\sE_\pm$ are isomorphic, then the existence of $\bar\fr$ is guaranteed by a theorem of Mukai~\cite[Theorem~6.1.6]{Huybrechts1997}.
\end{remark}

Since $H^2(Z_\pm,\sEnd_0(\sE_\pm))\iso H^1(Z_\pm,\sEnd_0(\sE_\pm)(-\Sigma_\pm))$ vanish by \eqref{eq:no-deformations}, there is a short exact sequence 
\begin{multline*}
  0 \to H^1(Z_\pm,\sEnd_0(\sE_\pm))
    \xrightarrow{\res_\pm} H^1(\Sigma_\pm,\sEnd_0(\sE_\pm|_{\Sigma_\pm})) \\
    \to H^2(Z_\pm,\sEnd_0(\sE_\pm)(-\Sigma_\pm)) \to 0.
\end{multline*}
This sequence is self-dual under Serre duality.
It was pointed out by \citet[p.\,176 ff.]{Tyurin2008} that this implies that
\begin{equation*}
  \im \lambda_\pm \subset H^1_{A_{\infty,\pm}} 
\end{equation*}
is a complex Lagrangian subspace with respect to the complex symplectic structure induced by $\Omega_\pm:=\omega_{J,\pm}+i\omega_{K,\pm}$ or, equivalently, Mukai's complex symplectic structure on $H^1(Z_\pm,\sEnd_0(\sE_\pm))$.
Under the assumptions of \autoref{thm:itcs} the moduli space $\sM(\Sigma_+)$ of holomorphic vector bundles over $\Sigma_+$ is smooth near $[\sE_+|_{\Sigma_+}]$ and so are the moduli spaces $\sM(Z_\pm)$ of holomorphic vector bundles over $Z_\pm$ near $[\sE_\pm]$.
Locally, $\sM(Z_\pm)$ embeds as a complex Lagrangian submanifold into $\sM(\Sigma_\pm)$.
Since $\fr^*\omega_{K,-}=-\omega_{K,+}$, both $\sM(Z_+)$ and $\sM(Z_-)$ can be viewed as Lagrangian submanifolds of $\sM(\Sigma_+)$ with respect to the symplectic form induced by $\omega_{K,+}$.  Equation~\eqref{eq:trivial-intersection} asks for these Lagrangian submanifolds to intersect transversely at the point $[\sE_+|_{\Sigma_+}]$.
If one thinks of $\Gtwo$--manifolds arising via the twisted connected sum construction as analogues of $3$--manifolds with a fixed Heegaard splitting, then this is much like the geometric picture behind Atiyah--Floer conjecture in dimension three \cite{Atiyah1988}.

\begin{remark}
  The hypothesis \eqref{eq:trivial-intersection} appears natural in view of the above discussion.
  Assuming \eqref{eq:H1=0} instead would slightly simplify the proof, see \autoref{rmk:transversality};
  however, it would also substantially restrict the applicability of \autoref{thm:itcs} and, hence, the chance of finding new examples of $\Gtwo$--instantons because \eqref{eq:H1=0} is a very strong assumption. 
\end{remark}

\begin{remark}
  \label{Remark_Examples}
  Using \autoref{thm:itcs} in a situation with \eqref{eq:H1=0}, the first example of an irreducible and unobstructed $\Gtwo$--instanton over a twisted connected sum has been constructed by the second named author in \cite{Walpuski2015}.
  Recent joint work by Grégoire Menet, Johannes Nordström and the first named author \cite{Menet2015} constructs a further example of an irreducible and unobstructed $\Gtwo$--instanton using \autoref{thm:itcs} in a situation where \eqref{eq:H1=0} fails.
\end{remark}

\paragraph{Outline}
We recall the salient features of the twisted connected sum construction in \autoref{sec:tcs}.
The expert reader may wish to skim through it to familiarise with our notation.
The objective of \autoref{sec:gluing} is to prove \autoref{thm:gluing}, which describes hypotheses under which a pair of  $\Gtwo$--instantons over a matching pair of ACyl $\Gtwo$--manifolds can be glued.
Finally, in \autoref{sec:holomorphic} we explain how these hypotheses can be verified for $\Gtwo$--instantons obtained via the first named author's construction.
\autoref{thm:itcs} is then proved by combining \autoref{thm:gluing} and \autoref{thm:saearp} with \autoref{prop:diagram}.

%%% Local Variables: 
%%% mode: latex
%%% TeX-master: "G2InstantonsTCSGluingTheorem"
%%% End:

\paragraph{Acknowledgements.}

We are grateful to Simon Donaldson for suggesting the problem solved in this article.
We thank Marcos Jardim.
Moreover, we thank the anonymous referee for many helpful comments and suggestions.
TW was supported by ERC Grant 247331 and Unicamp-Faepex grant 770/13.

\section{The twisted connected sum construction}
\label{sec:tcs}

In this section we review the twisted connected sum construction using the language introduced by Corti--Haskins--Nordström--Pacini~\cite{Corti2012a}.

\subsection{Gluing ACyl \texorpdfstring{$\Gtwo$}{G2}--manifolds}
\label{sec:gluing-acyl-g2-manifolds}

We begin with gluing matching pairs of ACyl $\Gtwo$--manifolds.
  
\begin{definition}
  Let $(Z,\omega,\Omega)$ be a compact Calabi--Yau $3$--fold.
  Here $\omega$ denotes the Kähler form and $\Omega$ denotes the holomorphic volume form.
  A $\Gtwo$--manifold $(Y,\phi)$ is called \defined{asymptotically cylindrical (ACyl)} with asymptotic cross-section $(Z,\omega,\Omega)$ if there exist a constant $\delta<0$, a compact subset $K\subset Y$, a diffeomorphism $\pi\co Y\setminus K\to\R_+\times Z$ and a $2$--form $\rho$ on $\R_+\times Z$ such that
  \begin{equation*}
    \pi_*\phi=\rd t\wedge\omega+\Re\Omega+\rd\rho
  \end{equation*}
  and
  \begin{equation*}
    \nabla^k\rho=O(e^{\delta t})
  \end{equation*}
  for all $k\in\N_0$.
  Here $t$ denotes the coordinate on $\R_+$.
\end{definition}

\begin{remark}
  Unfortunately, $Z$ is the customary notation both for building blocks and asymptotic cross-sections of ACyl $\Gtwo$--manifolds.
  To avoid confusion we point out that, unlike asymptotic cross-sections, building blocks always come in pair with a divisor, e.g., $(Z,\Sigma)$.
\end{remark}

\begin{definition}
  A pair of ACyl $\Gtwo$--manifolds $(Y_\pm,\phi_\pm)$ with asymptotic cross-sections $(Z_\pm,\omega_\pm,\Omega_\pm)$ is said to \defined{match} if there exists a diffeomorphism $f\co Z_+\to Z_-$ such that
  \begin{equation*}
    f^*\omega_-=-\omega_+ \quad\text{and}\quad
    f^*\Re\Omega_-=\Re\Omega_+.
  \end{equation*}
\end{definition}

Let $(Y_\pm,\phi_\pm)$ be a matching pair of ACyl $\Gtwo$--manifolds.
Fix $T\geq 1$.
Define $F\co [T,T+1]\times Z_+\to[T,T+1]\times Z_-$ by
\begin{equation*}
  F(t,z):=\(2T+1-t,f(z)\).
\end{equation*}

Denote by $Y_T$ the compact $7$--manifold obtained by gluing together 
\begin{equation*}
  Y_{T,\pm}:=K_\pm\cup \pi_\pm^{-1}\((0,T+1]\times Z_\pm\)
\end{equation*}
via $F$.
Fix a non-decreasing smooth function $\chi\co\R\to[0,1]$ \label{f:chi} with $\chi(t)=0$ for $t\leq 0$ and $\chi(t)=1$ for $t\geq 1$.
Define a $3$--form $\tilde\phi_T$ on $Y_T$ by
\begin{equation*}
  \tilde\phi_T:=\phi_\pm-\rd[\pi_\pm^*(\chi(t-T+1)\rho_\pm)]
\end{equation*}
on $Y_{T,\pm}$.
If $T\gg 1$, then $\tilde\phi_T$ defines a closed $\Gtwo$--structure on $Y_T$.
Clearly, all the $Y_T$ for different values of $T$ are diffeomorphic; hence, we often drop the $T$ from the notation.
The $\Gtwo$--structure $\tilde\phi_T$ is not torsion-free yet, but can be made so by a small perturbation:

\begin{theorem}[{\citet[Theorem~5.34]{Kovalev2003}}]
  \label{thm:kovalev}
  In the above situation there exist a constant $T_0\geq 1$ and for each $T\geq T_0$ there exists a $2$--form $\eta_T$ on $Y_T$ such that $\phi_T:=\tilde\phi_T+\rd\eta_T$ defines a torsion-free $\Gtwo$--structure;
  moreover, for some $\delta<0$
  \begin{equation}
    \label{eq:eta-estimate}
    \|\rd \eta_T\|_{C^{0,\alpha}}=O(e^{\delta T}).
  \end{equation}
\end{theorem}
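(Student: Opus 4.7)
The plan is to solve the nonlinear PDE for the torsion-free condition by Banach fixed-point, using $\tilde\phi_T$ as an approximate solution whose error decays exponentially in $T$. Since $\tilde\phi_T$ is closed, the ansatz $\phi_T = \tilde\phi_T + \rd\eta_T$ stays in the same cohomology class, so torsion-freeness reduces to the single equation $\rd *_{\phi_T}\!\phi_T = 0$, where $*_\phi$ is the nonlinear Hodge star determined by the $\rG_2$--structure $\phi$. Because $\phi \mapsto *_\phi \phi$ is a smooth algebraic (pointwise) function of $\phi$ in a $C^0$--neighbourhood of a torsion-free structure, we may expand
\begin{equation*}
  \rd *_{\phi_T}\!\phi_T \;=\; \rd *_{\tilde\phi_T}\!\tilde\phi_T \,+\, L_T\,\eta_T \,+\, Q_T(\eta_T),
\end{equation*}
where $L_T$ is a second-order linear operator on $2$--forms (of Hodge--Laplacian type modulo lower-order terms involving the torsion of $\tilde\phi_T$) and $Q_T$ collects terms at least quadratic in $\rd\eta_T$ and its first derivative.

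First I would bound the inhomogeneous term $e_T := \rd *_{\tilde\phi_T}\!\tilde\phi_T$. Outside the gluing region $\pi_\pm^{-1}([T,T+1]\times Z_\pm)$ this vanishes, since on each side $\tilde\phi_T = \phi_\pm$ is already torsion-free. Inside the gluing region, $\tilde\phi_T$ is obtained from the model cylindrical structure $\rd t\wedge\omega_\pm + \Re\Omega_\pm$ by subtracting $\rd[\chi(t-T+1)\rho_\pm]$ and matching via $F$; since $(Z_\pm,\omega_\pm,\Omega_\pm)$ are Calabi--Yau the model structure is itself torsion-free, and the decay hypotheses $\nabla^k\rho_\pm = O(e^{\delta t})$ from the ACyl condition then give
\begin{equation*}
  \|e_T\|_{C^{0,\alpha}} \;=\; O(e^{\delta T}),
\end{equation*}
with the same exponential rate $\delta < 0$ inherited from $\rho_\pm$ (up to arbitrarily small loss).

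The main obstacle is to establish uniform invertibility of $L_T$ (restricted to an appropriate slice, e.g.\ orthogonal to $\ker\rd^* \cap \ker \rd$) with bounds on the inverse independent of $T$. I would do this by an approximate-inverse/parametrix construction: on each ACyl half $Y_\pm$ the corresponding operator $L_\pm$ is Fredholm on exponentially weighted Hölder spaces $C^{k,\alpha}_\delta$ by the ACyl Fredholm theory (the indicial operator on the cylinder is invertible at the chosen weight because $(Z_\pm,\omega_\pm,\Omega_\pm)$ is a genuine Calabi--Yau $3$--fold), and the relevant cokernels vanish after restricting to the exact summand. Gluing these inverses with a cut-off partition of unity subordinate to $\{Y_{T,+}, Y_{T,-}\}$ produces an approximate inverse $P_T$ for $L_T$ whose error $L_T P_T - \id$ is concentrated in the neck and has norm $O(e^{\delta T})$; a Neumann series then yields a genuine right inverse $G_T$ with $\|G_T\|\leq C$ uniform in $T$.

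Finally, the quadratic remainder $Q_T$ satisfies $\|Q_T(\eta) - Q_T(\eta')\|_{C^{0,\alpha}} \le c\,(\|\eta\|_{C^{2,\alpha}} + \|\eta'\|_{C^{2,\alpha}})\,\|\eta-\eta'\|_{C^{2,\alpha}}$, uniformly in $T$ on a fixed small $C^{2,\alpha}$--ball, since the nonlinearity is algebraic in $\rd\eta$. The equation $L_T\eta_T + Q_T(\eta_T) = -e_T$ can then be rewritten as the fixed-point problem $\eta_T = G_T(-e_T - Q_T(\eta_T))$, and the Banach contraction principle applied to the ball of radius $K e^{\delta T}$ in $C^{2,\alpha}$ for suitable $K$ produces a unique solution $\eta_T$ with $\|\rd\eta_T\|_{C^{0,\alpha}} \le \|\eta_T\|_{C^{2,\alpha}} = O(e^{\delta T})$, as claimed. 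Smoothness of $\eta_T$ then follows by elliptic bootstrapping applied to the now-known $\phi_T$.
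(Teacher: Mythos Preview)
The paper does not give its own proof of this statement; it is quoted from Kovalev, whose argument in turn reduces to Joyce's general perturbation theorem for closed $\rG_2$--structures with small torsion. Your outline captures the correct overall strategy, and your error estimate on $e_T$ and your quadratic estimate are fine, but the linear step contains a genuine gap.

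You claim a \emph{uniform} bound $\|G_T\|\le C$ on the inverse of $L_T$, obtained by gluing right inverses $G_\pm$ from the two ACyl halves and summing a Neumann series. This does not work as stated. As $T\to\infty$ the neck lengthens and the relevant Laplace-type operator on $Y_T$ acquires small nonzero eigenvalues (of order $T^{-2}$), so its inverse on the complement of the kernel has norm growing in $T$; the same phenomenon affects $L_T$. Your parametrix does not circumvent this: the error $L_TP_T-\id$ is supported in the neck at $t\sim T$, and controlling it requires understanding $G_\pm$ applied to data concentrated far out on the cylindrical end, which is governed precisely by the \emph{bounded} (not decaying) solutions of $L_\pm u=0$ coming from harmonic forms on the cross-section, so the error is not $O(e^{\delta T})$ without further argument. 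Compare Step~\ref{step:2} in the proof of Theorem~\ref{thm:gluing} in this very paper, where even under a transversality hypothesis the inverse of the analogous glued operator is only shown to satisfy $\|L_T^{-1}\|\le c\,e^{|\delta|T/4}$.

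The fix is that you do not need a uniform bound: a polynomial bound $\|L_T^{-1}\|=O(T^N)$ suffices, since $e_T$ decays exponentially and hence the contraction constant is $O(T^{2N}e^{\delta T})\to 0$. This is essentially how Kovalev proceeds: he verifies the hypotheses of Joyce's existence theorem, in which the required smallness of the torsion is polynomial in the diameter, curvature and injectivity-radius scales of $(Y_T,\tilde g_T)$, and these grow only polynomially in $T$. If you want to argue directly as you propose, you should aim for a polynomial estimate on $L_T^{-1}$ rather than a $T$--independent one.
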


\subsection{ACyl Calabi--Yau \texorpdfstring{$3$}{3}--folds from building blocks}

The twisted connected sum is based on gluing ACyl $\Gtwo$--manifolds arising as the product of ACyl Calabi--Yau $3$--folds with $S^1$.

\begin{definition}
  \label{def:ACylCY3}
  Let $(\Sigma,\omega_I,\omega_J,\omega_K)$ be a hyperkähler surface.
  A Calabi--Yau $3$--fold $(V,\omega,\Omega)$ is called \defined{asymptotically cylindrical (ACyl)} with asymptotic cross-section $(\Sigma,\omega_I,\omega_J,\omega_K)$ if there exist a constant $\delta<0$, a compact subset $K\subset V$, a diffeomorphism $\pi\co V\setminus K\to \R_+\times S^1\times \Sigma$, a $1$--form $\rho$ and a $2$--form $\sigma$ on $\R_+\times S^1 \times \Sigma$ such that
  \begin{align*}
    \pi_*\omega&=\rd t\wedge\rd \alpha + \omega_I + \rd\rho, \\
    \pi_*\Omega&=(\rd \alpha-i\rd t)\wedge(\omega_J+i\omega_K) + \rd\sigma
  \end{align*}
  and
  \begin{equation*}
    \nabla^k\rho=O(e^{\delta t}) \quad\text{as well as}\quad
    \nabla^k\sigma=O(e^{\delta t})
  \end{equation*}
  for all $k\in\N_0$.
  Here $t$ and $\alpha$ denote the respective coordinates on $\R_+$ and $S^1$.
\end{definition}

Given an ACyl Calabi--Yau $3$--fold $(V,\omega,\Omega)$, taking the product with $S^1$, with coordinate $\beta$, yields an ACyl $\Gtwo$--manifold 
\begin{equation*}
  (Y:=S^1\times V, \phi:=\rd\beta\wedge\omega+\Re\Omega)
\end{equation*}
with asymptotic cross-section
\begin{equation*}
  (T^2\times \Sigma,\rd\alpha\wedge\rd\beta+\omega_K,(\rd\alpha-i\rd\beta)\wedge(\omega_J+i\omega_I)).
\end{equation*}

Let $V_\pm$ be a pair of ACyl Calabi--Yau $3$--folds with asymptotic cross-section $\Sigma_\pm$ and suppose that $\fr\co \Sigma_+\to \Sigma_-$ is a hyperkähler rotation, see \eqref{eq:hyperkahler-rotation}.
Then $Y_\pm:=V_\pm\times S^1$ match via the diffeomorphism $f\co T^2\times \Sigma_+\to T^2\times \Sigma_-$ defined by
\begin{equation*}
  f(\alpha,\beta,x):=(\beta,\alpha,\fr(x)).
\end{equation*}

\begin{remark}
  If $f$ did not interchange the $S^1$--factors, then $Y$ would have infinite fundamental group and, hence, could not carry a metric with holonomy equal to $\Gtwo$ \cite[Proposition~10.2.2]{Joyce2000}.
\end{remark}

ACyl Calabi--Yau $3$--folds can be obtained from the following building blocks:

\begin{definition}[{\citet[Definition~5.1]{Corti2012}}]
  \label{def:building-block}
  A \defined{building block} is a smooth projective $3$--fold $Z$ together with a projective morphism $f\co Z\to \P^1$ such that the following hold:
  \begin{itemize}
  \item The anticanonical class $-K_Z\in H^2(Z)$ is primitive.
  \item $\Sigma:=f^{-1}(\infty)$ is a smooth $K3$ surface and $\Sigma \sim
    -K_Z$.
  \item If $N$ denotes the image of $H^2(Z)$ in $H^2(\Sigma)$, then the embedding $N\into H^2(\Sigma)$ is primitive.
  \item $H^3(Z)$ is torsion-free.
  \end{itemize}
\end{definition}

\begin{remark}
  The existence of the fibration $f\co Z\to \P^1$ is equivalent to $\Sigma$ having trivial normal bundle.
  This is crucial because it means that $Z\setminus \Sigma$ has a cylindrical end.
  The last two conditions in the definition of a building block are not essential; they have been made to facilitate the computation of certain topological invariants in \cite{Corti2012}.
\end{remark}

In his original work Kovalev~\cite{Kovalev2003} used building blocks arising from Fano $3$--folds by blowing-up the base-locus of a generic anti-canonical pencil.
This method was extended to the much larger class of semi Fano $3$--folds (a class of weak Fano $3$--folds) by \citet{Corti2012a}.
\citet{Kovalev2011} construct building blocks starting from $K3$ surfaces with non-symplectic involutions, by taking the product with $\P^1$, dividing by $\Z_2$ and blowing up the resulting singularities.

\begin{theorem}[{\citet[Theorem D]{Haskins2012}}]
  \label{thm:hhn}
  Let $(Z,\Sigma)$ be a building block and let $(\omega_I,\omega_J,\omega_K)$ be a hyperkähler structure on $\Sigma$ such that $\omega_J + i\omega_K$ is of type $(2,0)$.
  If $[\omega_I]\in H^{1,1}(\Sigma)$ is the restriction of a Kähler class on $Z$, then there is an asymptotically cylindrical Calabi--Yau structure $(\omega,\Omega)$ on $V:=Z\setminus \Sigma$ with asymptotic cross-section $(\Sigma,\omega_I,\omega_J,\omega_K)$.
\end{theorem}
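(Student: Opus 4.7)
The plan is to follow the now-standard strategy for non-compact Calabi--Yau metrics: first produce an approximate ACyl Calabi--Yau structure on $V$, and then perturb it to a genuine one by solving a Monge--Ampère equation in suitable weighted function spaces.

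For the approximation, begin by noting that since $\Sigma\sim -K_Z$, the defining section of $\Sigma$ is a meromorphic section of $-K_Z$ with a simple pole along $\Sigma$; its reciprocal yields a nowhere-vanishing holomorphic volume form $\Omega_0$ on $V=Z\setminus\Sigma$. The triviality of the normal bundle to $\Sigma$ (encoded by the existence of the fibration $f\co Z\to\P^1$) identifies a neighbourhood of $\Sigma$ holomorphically with $\Sigma\times \Delta$; setting $z=e^{-t-i\alpha}$ on the punctured disc turns the end of $V$ into $\R_+\times S^1\times\Sigma$, and in these coordinates $\Omega_0$ is asymptotic to $(\rd\alpha-i\rd t)\wedge(\omega_J+i\omega_K)$ up to exponentially small terms. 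On the symplectic side, the hypothesis that $[\omega_I]$ extends to a Kähler class on $Z$ provides a Kähler form $\omega_Z$ on $Z$ with $[\omega_Z|_\Sigma]=[\omega_I]$; Moser's trick on $\Sigma$ replaces $\omega_Z|_\Sigma$ by $\omega_I$ itself, and a cut-off interpolates with the model $\rd t\wedge\rd\alpha+\omega_I$ on the end. The resulting pair $(\omega_0,\Omega_0)$ is an approximate ACyl Calabi--Yau structure, and its failure to be Calabi--Yau, measured by the function $f:=\log\bigl((i/8)\Omega_0\wedge\bar\Omega_0\big/(\omega_0^3/6)\bigr)$, decays exponentially along the cylindrical end.

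The genuine Calabi--Yau structure is then obtained by solving the Monge--Ampère equation $(\omega_0+i\partial\bar\partial u)^3=e^f\omega_0^3$ for an exponentially decaying function $u$. Run the continuity method on the family $e^{sf}\omega_0^3$ for $s\in[0,1]$. Openness at each $s$ follows from the Fredholm theory of the Laplacian $\Delta_{\omega_s}$ on weighted Hölder spaces $C^{k,\alpha}_\delta(V)$: for all but a discrete set of weights $\delta<0$, this operator is an isomorphism $C^{k+2,\alpha}_\delta\to C^{k,\alpha}_\delta$, the critical weights being determined by the spectrum of $\Delta_{S^1\times\Sigma}$ via the indicial-root analysis of the model Laplacian on $\R\times S^1\times\Sigma$. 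Closedness reduces, via Yau's framework, to uniform a priori $C^{2,\alpha}_\delta$ estimates; the $C^2$ and Evans--Krylov--Schauder steps are essentially local and carry over from the compact case once a $C^0$ bound is in hand.

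The main obstacle is the $C^0$ estimate with exponential decay. A uniform $L^\infty$ bound can be obtained by Moser iteration adapted to the ACyl setting, using an ACyl Sobolev inequality and the positive bottom of the spectrum of $\Delta_{\omega_0}$ on the end. The exponential decay is more delicate: outside a sufficiently large compact set, $u$ satisfies a linear elliptic equation with exponentially decaying inhomogeneity, and one compares $u$ to a multiple of $e^{\delta t}$ via a barrier/maximum-principle argument on the cylinder, exploiting that the leading indicial root of $\Delta$ at infinity is strictly negative. Once $u$ is shown to decay exponentially together with all its derivatives (bootstrapping via the Schauder theory in weighted spaces), the pair $(\omega_0+i\partial\bar\partial u,\Omega_0)$ is a genuine Calabi--Yau structure on $V$ which is asymptotically cylindrical with cross-section $(\Sigma,\omega_I,\omega_J,\omega_K)$.
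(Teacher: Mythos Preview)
The paper does not prove this theorem: it is quoted verbatim from Haskins--Hein--Nordström~\cite{Haskins2012}*{Theorem~D} and used as a black box, with only the historical Remark that the result was first claimed by Kovalev. There is therefore no ``paper's own proof'' to compare against.

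That said, your outline is a fair summary of the strategy carried out in the cited reference (and, in its earlier form, by Kovalev): build an approximate ACyl Calabi--Yau structure using a tubular neighbourhood of $\Sigma$ and the cylindrical model, then run the continuity method for the complex Monge--Amp\`ere equation in weighted H\"older spaces, with the $C^0$/decay estimate as the crux. Two points where your sketch is looser than the actual argument: first, triviality of the normal bundle does \emph{not} by itself give a holomorphic identification of a neighbourhood of $\Sigma$ with $\Sigma\times\Delta$; one needs an additional argument (in the building-block case this uses that $\Sigma$ is an anticanonical $K3$), and getting the model $\Omega_0$ and $\omega_0$ to match the cylindrical model to exponential accuracy is one of the places where Kovalev's original treatment had gaps that \cite{Haskins2012} repairs. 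Second, for matching $\omega_Z|_\Sigma$ with $\omega_I$ one uses the $\partial\bar\partial$--lemma on the K3 (both forms are K\"ahler for the same complex structure and cohomologous), not Moser's trick, which would only produce a diffeomorphism and disturb the complex structure. With those caveats, your sketch is consistent with the proof in the cited source; for the present paper, however, the correct ``proof'' is simply the citation.
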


\begin{remark}
  This result was first claimed by \citet[Theorem~2.4]{Kovalev2003}; see the discussion in~\cite[Section 4.1]{Haskins2012}.
\end{remark}

Combining the results of Kovalev and Haskins--Hein--Nordström, each matching pair of building blocks (see \autoref{def:matching-data}) yields a one-parameter family of $\Gtwo$--manifolds.
This is called the \defined{twisted connected sum construction}.

\begin{figure}
  \centering
  \begin{tikzpicture}
    \coordinate (l) at (0,0);
    \coordinate (r) at (4,0);
    % left blob:
    \draw (l) to [out=180,in=0] +(-2,0) to [out=180,in=0] +(-1.2,.4) to [out=180, in=90] +(-.8,-.8) to [out=-90,in=180] +(.8,.-.8) to [out=0,in=180] +(+1.2,+.4) to [out=0,in=180] +(2,0);
    % ... extension to infinity
    \draw [densely dotted,color=gray] (l) to +(.4,0);
    \draw [densely dotted,color=gray] (l)+(0,-.8) to +(.4,-.8);
    \draw [densely dotted,color=gray] (l)+(.4,0) arc (90:-90:0.1 and 0.4);
    \draw [dotted,color=gray] (l)+(.4,0) arc (90:270:0.1 and 0.4);
    % ... collar
    \draw (l) arc (90:-90:0.1 and 0.4);
    \draw (l) [dashed,color=gray] arc (90:270:0.1 and 0.4);
    \draw (l)+(-.3,0) arc (90:-90:0.1 and 0.4);
    \draw (l)+(-.3,0) [dashed,color=gray] arc (90:270:0.1 and 0.4);
    % ... x S1
    \draw (l)+(-2.2,-1.2) node [right] {$\times$};
    \draw (l)+(-2.2,-1.6) node [right] {$S^1$};
    % ... label
    \draw (l)+(-3.1,-.4) node {$V_+$};
    % ... brace
    \draw [decorate,decoration={brace,amplitude=3pt}] ($(l)+(-4,.6)$) to node [above=1mm] {$Y_{T,+}$} ($(l)+(0,.6)$);
    % ... [T,T+1]
    \draw (l)+(-.15,0) node [above] {\tiny $[T\!,\!T\!\!+\!\!1]$};

    % right blob:
    \draw (r) to [out=0,in=180] +(2,0) to [out=0,in=180] +(1.2,.4) to [out=0,in=90] +(.8,-.8) to [out=-90,in=0] +(-.8,.-.8) to [out=180,in=0] +(-1.2,+.4) to [out=180,in=0] +(-2,0);
    % ... extension to infinity
    \draw [densely dotted,color=gray] (r) to +(-.4,0);
    \draw [densely dotted,color=gray] (r)+(0,-.8) to +(-.4,-.8);
    \draw [densely dotted,color=gray] (r)+(-.4,0) arc (90:-270:0.1 and 0.4);
    % ... collar
    \draw (r) arc (90:-270:0.1 and 0.4);
    \draw (r)+(.3,0) arc (90:-90:0.1 and 0.4);
    \draw (r)+(.3,0) [dashed,color=gray] arc (90:270:0.1 and 0.4);
    % ... x S1
    \draw (r)+(1.7,-1.2) node [right] {$\times$};
    \draw (r)+(1.7,-1.6) node [right] {$S^1$};
    % ... label
    \draw (r)+(+3.1,-.4) node {$V_-$};
    % ... brace
    \draw [decorate,decoration={brace,amplitude=3pt}] ($(r)+(0,.6)$) to node [above=1mm] {$Y_{T,-}$} ($(r)+(4,.6)$);
    
    % boundary labels
    \draw (l)+(.6,0) node [right] {$\Sigma_+$};
    \draw (l)+(.6,-.4) node [right] {$\times$};
    \draw (l)+(.6,-.8) node [right] {$S^1$};
    \draw (r)+(-1.3,0) node [right] {$\Sigma_-$};
    \draw (r)+(-1.3,-.4) node [right] {$\times$};
    \draw (r)+(-1.3,-.8) node [right] {$S^1$};
    
    % arrows
    \draw [-stealth] ($(l)+(1.3,0)$) to node [above] {\scriptsize $\fr$} ($(r)+(-1.3,0)$);
    \draw [-stealth] ($(l)+(-1.5,-1.6)$) to ($(l)+(1.3,-1.6)$) to [out=0,in=180] ($(r)+(-1.3,-.8)$);
    \draw [color=white,line width=2pt] ($(l)+(1.3,-.8)$) to [out=0,in=180] ($(r)+(-1.3,-1.6)$) to ($(r)+(1.7,-1.6)$) ;
    \draw [-stealth] ($(l)+(1.3,-.8)$) to [out=0,in=180] ($(r)+(-1.3,-1.6)$) to ($(r)+(1.7,-1.6)$);
  \end{tikzpicture}
  \caption{The twisted connected sum of a matching pair of building blocks.}
\end{figure}
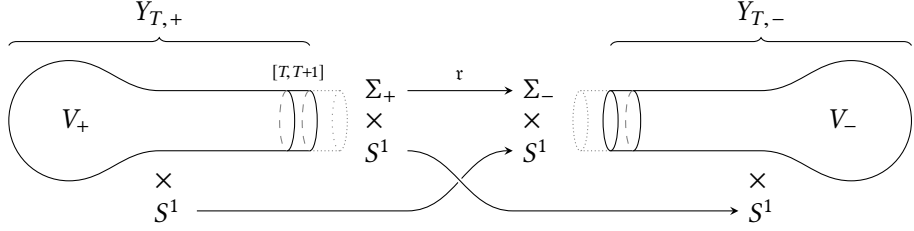

%%% Local Variables:
%%% mode: latex
%%% TeX-master: "GluingTheorem"
%%% End:

%%% Local Variables: 
%%% mode: latex
%%% TeX-master: "G2InstantonsTCSGluingTheorem"
%%% End:

\section{Gluing \texorpdfstring{$\Gtwo$}{G2}--instantons over ACyl \texorpdfstring{$\Gtwo$}{G2}--manifolds}
\label{sec:gluing}

In this section we discuss when a pair of $\Gtwo$--instantons over a matching pair of ACyl $\Gtwo$--manifolds $Y_\pm$ can be glued to give a $\Gtwo$--instanton over $(Y,\phi_T)$.

\subsection{Linear analysis on ACyl manifolds}

We recall some results about linear analysis on ACyl Riemannian manifolds.
The references for the material in this subsection are \citet{Mazya1978} and \citet{Lockhart1985}.

\subsubsection{Translation-invariant operators on cylindrical manifolds}

Let $E\to X$ be a Riemannian vector bundle over a compact Riemannian manifold.
By slight abuse of notation we also denote by $E$ its pullback to $\R\times X$.
Denote by $t$ the coordinate function on $\R$.
For $k\in\N_0$, $\alpha\in(0,1)$ and $\delta\in\R$ we define
\begin{equation*}
  \|\cdot\|_{C^{k,\alpha}_\delta} := \|e^{-\delta t}\cdot\|_{C^{k,\alpha}}
\end{equation*}
and denote by $C^{k,\alpha}_\delta(\R\times X,E)$ the closure of $C^\infty_0(\R\times X,E)$ with respect to this norm.
We set $C^\infty_\delta:=\bigcap_{k} C^{k,\alpha}_\delta$.
 
Let $D\co C^\infty(X,E)\to C^\infty(X,E)$ be a linear self-adjoint elliptic operator of first order.
The operator
\begin{equation*}
  L_\infty:=\del_t-D
\end{equation*}
extends to a bounded linear operator $L_{\infty,\delta}\co C^{k+1,\alpha}_\delta(\R\times X,E)\to C^{k,\alpha}_\delta(\R\times X,E)$.

\begin{theorem}[{\citet[Theorem 5.1]{Mazya1978}}]
  \label{thm:mazya-plamenevskii}
  The linear operator $L_{\infty,\delta}$ is invertible if and only if $\delta\notin \spec(D)$.
\end{theorem}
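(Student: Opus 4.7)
The plan is to reduce the problem to one-dimensional constant-coefficient ODEs via conjugation and spectral decomposition, and then to reassemble the scalar inverses into a bounded operator on the cylinder using the discreteness of $\spec(D)$ and standard Schauder estimates.

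First I would remove the weight by conjugation. Writing $u=e^{\delta t}v$, a direct computation gives $L_\infty u = e^{\delta t}\bigl(\del_t - (D-\delta)\bigr)v$, and multiplication by $e^{\delta t}$ is an isometry $C^{k,\alpha}\to C^{k,\alpha}_\delta$. So $L_{\infty,\delta}$ is invertible if and only if the operator $\wtilde L := \del_t - \wtilde D$ with $\wtilde D := D-\delta$ is invertible as a map $C^{k+1,\alpha}(\R\times X,E)\to C^{k,\alpha}(\R\times X,E)$ on the unweighted spaces (more precisely, on the closure of $C^\infty_0$). Since $\spec(\wtilde D)=\spec(D)-\delta$, the statement is reduced to: $\wtilde L$ is invertible iff $0\notin\spec(\wtilde D)$.

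Next I would treat the easy "only if" direction. Assuming $0\in\spec(\wtilde D)$, fix a unit eigenfunction $\phi\in C^\infty(X,E)$ with $\wtilde D\phi=0$ and a sequence of cutoffs $\chi_n\co\R\to[0,1]$ equal to $1$ on $[-n,n]$ and supported in $[-n-1,n+1]$, with derivatives uniformly bounded. The functions $u_n(t,x):=\chi_n(t)\phi(x)$ lie in $C^\infty_0(\R\times X,E)$ and satisfy $\|u_n\|_{C^{k+1,\alpha}}\geq c>0$ while $\|\wtilde L u_n\|_{C^{k,\alpha}} = \|\chi_n'(t)\phi(x)\|_{C^{k,\alpha}}$ is uniformly bounded by a constant $C$ independent of $n$. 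A similar translation argument shows one cannot have a uniform estimate $\|u\|\lesssim\|\wtilde L u\|$, ruling out invertibility.

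For the "if" direction, assume $0\notin\spec(\wtilde D)$; since $X$ is compact and $\wtilde D$ is self-adjoint elliptic of first order, $\spec(\wtilde D)$ is discrete with no accumulation point at $0$, so there is a gap $\mu:=\dist(0,\spec(\wtilde D))>0$. The plan is to construct the inverse $K\co C^{k,\alpha}\to C^{k+1,\alpha}$ explicitly. Using the spectral decomposition $f(t,x) = \sum_{\lambda\in\spec(\wtilde D)} f_\lambda(t)\phi_\lambda(x)$, the equation $\wtilde L u = f$ decouples into scalar ODEs $(\del_t - \lambda)u_\lambda = f_\lambda$. On the unweighted spaces these are invertible precisely because $\lambda\neq 0$; the inverse is convolution with the scalar Green's function
\begin{equation*}
  G_\lambda(t) = \begin{cases}
    -e^{\lambda t}\,\mathbf{1}_{t<0}, & \lambda>0, \\
    \phantom{-}e^{\lambda t}\,\mathbf{1}_{t>0}, & \lambda<0,
  \end{cases}
\end{equation*}
which has $L^1$-norm $1/|\lambda|\leq 1/\mu$. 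Summing over $\lambda$ yields a formal solution operator $Kf$.

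The main obstacle—and where I expect most of the work to go—is upgrading this formal construction to a bounded operator between Hölder spaces (as opposed to $L^2$, where boundedness is immediate from $\mu>0$). The plan here is a parametrix argument: split $K = K_{\mathrm{low}} + K_{\mathrm{high}}$ where $K_{\mathrm{low}}$ handles finitely many low eigenvalues $|\lambda|\leq\Lambda$ via the explicit Green's functions (yielding trivially a bounded operator with good decay properties), while $K_{\mathrm{high}}$ handles $|\lambda|>\Lambda$ using the fact that $\wtilde D$ is uniformly invertible there and invoking interior Schauder estimates for the elliptic operator $\wtilde L$ on unit-length cylinder pieces $[t-1,t+1]\times X$. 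Combining the two and using $e^{-\mu|t-s|}$-type off-diagonal decay inherited from the $G_\lambda$ gives the desired bounded inverse, completing the proof.
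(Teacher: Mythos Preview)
The paper does not prove this theorem; it is quoted verbatim from Maz'ya--Plamenevski{\u\i} and used as a black box. So there is no ``paper's own proof'' to compare against. Your sketch is the standard Fourier/spectral-decomposition argument that underlies the cited result, and the overall architecture (conjugate away the weight, decouple into scalar ODEs $u_\lambda'-\lambda u_\lambda=f_\lambda$, invert with the exponential Green's kernel, reassemble) is correct.

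Two comments on the execution. First, your ``only if'' argument as written does not quite close: showing that $\|\wtilde L u_n\|_{C^{k,\alpha}}$ is \emph{uniformly bounded} while $\|u_n\|_{C^{k+1,\alpha}}\geq c$ does not contradict invertibility. You need the ratio to go to zero. The fix is to stretch rather than truncate: take $u_n(t,x)=\chi(t/n)\phi(x)$ for a fixed cutoff $\chi$, so that $\wtilde L u_n=n^{-1}\chi'(t/n)\phi(x)$ has $C^{k,\alpha}$ norm $O(1/n)$ while $\|u_n\|_{C^{k+1,\alpha}}\geq\|\phi\|_{L^\infty}$. (Alternatively, argue that surjectivity fails: for $f(t,x)=g(t)\phi(x)$ with $g\in C^\infty_0$ and $\int g\neq 0$, any preimage has $\phi$--component $\int_{-\infty}^t g$, which does not decay.) Second, in the ``if'' direction your honest acknowledgement that the H\"older boundedness of the assembled inverse is the real work is appropriate; the low/high-frequency splitting you outline is one workable route, and another is to first establish the $L^2$ inverse (immediate from the spectral gap $\mu$) and then bootstrap to $C^{k,\alpha}$ via interior Schauder estimates on unit cylinders combined with the exponential off-diagonal decay $e^{-\mu|t-s|}$ of the kernel.
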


Elements $a\in\ker L_\infty$ can be expanded as
\begin{equation}
  \label{eq:expansion}
  a=\sum_{\delta\in\spec D} e^{\delta t} a_\delta
\end{equation}
where $a_\delta$ are $\delta$--eigensections of $D$, see~\cite[Section~3.1]{Donaldson2002}.
One consequence of this is the following result:

\begin{prop}
  \label{prop:translation-invariant-kernel}
  Denote by $\lambda_+$ and $\lambda_-$ the first positive and negative eigenvalue of $D$, respectively.
  If $a\in \ker L_\infty$ and
  \begin{equation*}
    a = O(e^{\delta t}) ~\text{as}~ t\to\infty
  \end{equation*}
  with $\delta<\lambda_+$, then there exists $a_0\in\ker D$
  such that
  \begin{equation*}
    \nabla^k(a-a_0) = O(e^{\lambda_-t}) ~\text{as}~ t\to\infty
  \end{equation*}
  for all $k\in\N_0$.
  If $a\in L^\infty(\R\times X,E)$, then $a=a_0$.
\end{prop}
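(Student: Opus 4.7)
The plan is to use the eigenmode expansion \eqref{eq:expansion}: any $a \in \ker L_\infty$ may be written as $a = \sum_{\mu \in \spec D} e^{\mu t} a_\mu$ with $a_\mu$ a $\mu$--eigensection of $D$, and the growth hypothesis will kill all but a few modes.

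First I would show that $a = O(e^{\delta t})$ with $\delta < \lambda_+$ forces $a_\mu = 0$ for every $\mu > \delta$. For a fixed eigenvalue $\mu$, the $L^2(X)$--orthogonal projection onto the $\mu$--eigenspace of $D$, applied to $a(t,\cdot)$, equals $e^{\mu t} a_\mu$; hence
\[
  \|a_\mu\|_{L^2(X)}\, e^{\mu t} \;=\; \|P_\mu a(t,\cdot)\|_{L^2(X)} \;\leq\; \|a(t,\cdot)\|_{L^2(X)} \;\leq\; C e^{\delta t}
\]
for $t$ large. If $\mu > \delta$, letting $t \to \infty$ forces $a_\mu = 0$. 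Since $\lambda_+$ is the first positive eigenvalue of $D$, the only surviving modes are $\mu = 0$ and $\mu \leq \lambda_-$.

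Second, define $a_0$ to be the $\mu = 0$ component, so that $a_0 \in \ker D$ (and $a_0 = 0$ if $0 \notin \spec D$ or $\delta < 0$). Then $b := a - a_0$ still satisfies $L_\infty b = 0$, and its expansion contains only modes with $\mu \leq \lambda_-$. Fix $T_0$ large enough that the expansion converges in $L^2(\{T_0\}\times X)$. For $t \geq T_0$ and $\mu \leq \lambda_- < 0$ we have $e^{2\mu t} = e^{2\mu T_0} e^{2\mu(t-T_0)} \leq e^{2\mu T_0} e^{2\lambda_-(t-T_0)}$, so Parseval yields
\[
  \|b(t,\cdot)\|_{L^2(X)}^2 \;=\; \sum_{\mu \leq \lambda_-} e^{2\mu t} \|a_\mu\|_{L^2(X)}^2 \;\leq\; e^{2\lambda_-(t-T_0)}\, \|b(T_0,\cdot)\|_{L^2(X)}^2 \;=\; O(e^{2\lambda_- t}).
\]
Interior elliptic regularity for the translation-invariant operator $L_\infty$, applied on unit-length cylinders $[t-1,t+2]\times X$, then promotes this $L^2$ bound to $\|\nabla^k b\|_{C^0([t,t+1]\times X)} = O(e^{\lambda_- t})$ for every $k$, which is the first conclusion.

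For the $L^\infty$ statement I would note that if $a$ is globally bounded on $\R \times X$, then for each $\mu \in \spec D$ the function $t \mapsto e^{\mu t}\|a_\mu\|_{L^2(X)}$ is bounded on all of $\R$, which is possible only if $\mu = 0$ or $a_\mu = 0$; consequently $a = a_0$. The one mildly delicate point is passing from the $L^2$--decay to the $C^k$--decay, but this is a routine consequence of the translation invariance of $L_\infty$ and standard interior Schauder estimates, so no genuine obstacle arises.
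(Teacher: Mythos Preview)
Your argument is correct and is precisely the natural fleshing-out of what the paper does: the paper offers no detailed proof, merely stating that the proposition is ``one consequence'' of the eigenmode expansion \eqref{eq:expansion}, and your proposal supplies exactly the expected details---projecting onto eigenspaces to kill the modes with $\mu>\delta$, then bounding the tail by Parseval and upgrading via interior elliptic estimates. There is nothing to add.
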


\subsubsection[Asymptotically translation-invariant operators ...]{Asymptotically translation-invariant operators on ACyl manifolds}

Let $M$ be a Riemannian manifold together with a compact set $K\subset M$ and a diffeomorphism $\pi\co M\setminus K\to \R_+\times X$ such that the push-forward of the metric on $M$ is asymptotic to the metric on $\R_+\times X$, this means here and in what follows that their difference and all of its derivatives are $O(e^{\delta t})$ as $t\to\infty$ with $\delta<0$.
Let $F$ be a Riemannian vector bundle and let $\bar\pi\co F|_{M\setminus K} \to E$ be a bundle isomorphism covering $\pi$ such that the push-forward of the metric on $F$ is asymptotic to the metric on $E$.
Denote by $t\co M\to[1,\infty)$ a smooth positive function which agrees with $t\circ\pi$ on $\pi^{-1}([1,\infty)\times X)$.  We define
\begin{equation*}
  \|\cdot\|_{C^{k,\alpha}_\delta}:=\|e^{-\delta t}\cdot\|_{C^{k,\alpha}}
\end{equation*}
and denote by $C^{k,\alpha}_\delta(M,F)$ the closure of $C^\infty_0(M,F)$ with respect to this norm.  

Let $L\co C_0^\infty(M,E)\to C_0^\infty(M,E)$ be an elliptic operator asymptotic to $L_\infty=\del_t-D$, that is, the coefficients of the push-forward of $L$ to $\R_+\times X$ are asymptotic to the coefficients of $L_\infty$.
The operator $L$ extends to a bounded linear operator $L_\delta\co C^{k+1,\alpha}_{\delta}(M,E)\to C^{k,\alpha}_{\delta}(M,E)$.

\begin{prop}[{\cite[Proposition 2.4]{Haskins2012}}]
  \label{prop:non-critical-fredholm}
  If $\delta\notin\spec(D)$, then $L_\delta$ is Fredholm.
\end{prop}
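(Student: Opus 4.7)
The plan is to prove Fredholmness of $L_\delta$ via a parametrix construction that patches together the inverse $L_{\infty,\delta}^{-1}$ on the cylindrical end---available by Theorem~\ref{thm:mazya-plamenevskii} precisely because $\delta\notin\spec(D)$---with an interior elliptic parametrix on a compact core of $M$. Choose cutoffs $\chi,\tilde\chi\co M\to[0,1]$ supported on $\pi^{-1}([T,\infty)\times X)$ such that $\chi\equiv 1$ on $\pi^{-1}([T+1,\infty)\times X)$ and $\tilde\chi\equiv 1$ on $\supp\chi$. For $u\in C^{k,\alpha}_\delta(M,F)$, transport $\chi u$ to $\R\times X$ via $(\pi,\bar\pi)$, extend by zero for $t\leq T$, apply $L_{\infty,\delta}^{-1}$, and pull back to $M$ to obtain a section $v_\infty$ defined on $\supp\tilde\chi$ that satisfies $L_\infty v_\infty=\chi u$ on the common cylindrical domain. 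Separately, let $v_K$ be produced by applying a standard interior elliptic parametrix for $L$ to the compactly supported section $(1-\chi)u$, so that $Lv_K=(1-\chi)u+Ru$ with $R$ smoothing and of compactly supported kernel. Define
\begin{equation*}
  P_\delta u := \tilde\chi\, v_\infty + v_K.
\end{equation*}

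A direct computation using $\tilde\chi\chi=\chi$ yields
\begin{equation*}
  L_\delta P_\delta\, u - u = [L,\tilde\chi]\, v_\infty + \tilde\chi\,(L-L_\infty)\, v_\infty + Ru,
\end{equation*}
so the task reduces to showing that each of the three error terms on the right defines a compact operator $C^{k,\alpha}_\delta\to C^{k,\alpha}_\delta$. The smoothing remainder $Ru$ and the commutator $[L,\tilde\chi]v_\infty$ have images supported in a fixed compact subset of $M$ and gain a derivative, so they are compact by Arzelà--Ascoli on bounded domains (using that $L_{\infty,\delta}^{-1}$ is bounded to make the commutator into a compact operator on $u$). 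The substantial point is the asymptotic error $\tilde\chi(L-L_\infty)v_\infty$: by hypothesis the coefficients of $L-L_\infty$ are $O(e^{\sigma t})$ for some $\sigma<0$, which means $\tilde\chi(L-L_\infty)$ maps $C^{k+1,\alpha}_\delta$ continuously into $C^{k,\alpha}_{\delta+\sigma}$, and one then needs the inclusion $C^{k,\alpha}_{\delta+\sigma}\hookrightarrow C^{k,\alpha}_\delta$ to be compact. This compact embedding follows from a diagonal Arzelà--Ascoli argument along an exhaustion of $M$ by compacta, the exponential decay $e^{\sigma t}\to 0$ as $t\to\infty$ supplying uniform smallness outside large compacta. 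Once this is in hand, $L_\delta P_\delta=\id+K_r$ with $K_r$ compact, producing a right parametrix; the symmetric construction (applied to the formal adjoint $L^*_{-\delta}$, which is of the same type) produces a left parametrix, whence $L_\delta$ is Fredholm.

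The principal technical obstacle is establishing the compact embedding between weighted Hölder spaces of distinct weights: this is where the fact that the ACyl metric is genuinely asymptotic to the product metric on $\R\times X$ (rather than merely close to it) gets used in an essential way. The remainder of the argument is standard parametrix patching, and the hypothesis $\delta\notin\spec(D)$ enters only through Theorem~\ref{thm:mazya-plamenevskii}.
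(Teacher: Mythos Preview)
The paper does not give its own proof of this proposition; it simply cites \cite{Haskins2012}*{Proposition~2.4}. Your parametrix-patching strategy is the standard one and is essentially what lies behind that reference, so the overall architecture is fine. There is, however, a genuine gap in the treatment of the asymptotic error term.

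You claim that the inclusion $C^{k,\alpha}_{\delta+\sigma}\hookrightarrow C^{k,\alpha}_\delta$ is compact and justify this by ``a diagonal Arzel\`a--Ascoli argument along an exhaustion of $M$ by compacta''. This is false: on any fixed compact set the two weighted norms are equivalent, so a bounded sequence in $C^{k,\alpha}_{\delta+\sigma}$ restricted to a compactum is merely bounded in $C^{k,\alpha}$, and Arzel\`a--Ascoli cannot extract a $C^{k,\alpha}$--convergent subsequence without some gain in regularity (a higher H\"older exponent or an extra derivative). Any sequence supported in a fixed compact set that is bounded in $C^{k,\alpha}$ but has no $C^{k,\alpha}$--convergent subsequence gives a counterexample. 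The regularity gain from $L_{\infty,\delta}^{-1}$ does not help, because the first-order operator $L-L_\infty$ consumes exactly that derivative; the weight improvement alone is not enough.

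The standard repair is to replace compactness by \emph{smallness} for this term. Since the coefficients of $L-L_\infty$ are $O(e^{\sigma t})$ with $\sigma<0$, one has
\begin{equation*}
  \|\tilde\chi(L-L_\infty)v_\infty\|_{C^{k,\alpha}_\delta}
  \leq C\,e^{\sigma T}\,\|u\|_{C^{k,\alpha}_\delta},
\end{equation*}
so by choosing the truncation parameter $T$ large you can arrange this error to have operator norm less than $\tfrac12$. Then $L_\delta P_\delta=\id+K_{\rm small}+K_{\rm cpt}$ with $\|K_{\rm small}\|<\tfrac12$ and $K_{\rm cpt}$ compact (the commutator and smoothing terms, which you handled correctly). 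Since $\id+K_{\rm small}$ is invertible, $(\id+K_{\rm small})^{-1}L_\delta P_\delta=\id+(\id+K_{\rm small})^{-1}K_{\rm cpt}$ is a Fredholm operator, and the argument concludes as you indicated.
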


Elements in the kernel of $L$ still have an asymptotic expansion analogous to \eqref{eq:expansion}.
We need the following result which extracts the constant term of this expansion.

\begin{prop}
  \label{prop:abstract-iota}
  There is a constant $\delta_0>0$ such that, for all $\delta\in[0,\delta_0]$, $\ker L_\delta=\ker L_0$ and there is a linear map $\iota\co \ker L_0 \to \ker D$ such that
  \begin{equation*}
    \nabla^k\(\bar\pi_* a-\iota(a)\) = O(e^{-\delta_0 t})~\text{as}~t\to\infty
  \end{equation*}
  for all $k\in\N_0$; in particular,
  \begin{equation*}
    \ker\iota = \ker L_{-\delta_0}.
  \end{equation*}
\end{prop}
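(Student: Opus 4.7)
The plan is to exploit the discrete spectrum of the self-adjoint elliptic operator $D$ on the compact cross-section $X$ and to adapt the asymptotic expansion theory for $L_\infty$--harmonic sections on the cylinder to the asymptotically translation-invariant operator $L$. I would fix $\delta_0 > 0$ satisfying three smallness conditions: $\delta_0 < \lambda_+$, the smallest positive eigenvalue of $D$; $\delta_0 < |\lambda_-|$, the modulus of the largest negative eigenvalue of $D$; and $\delta_0$ strictly less than the exponential rate at which $L - L_\infty$ decays on the end. Discreteness of $\spec D$ and the asymptotic hypothesis on $L$ ensure such $\delta_0$ exists, and the choice guarantees $\spec D \cap [-\delta_0, \delta_0] \subseteq \{0\}$ with both $\pm \delta_0$ non-critical for $L_\infty$, so Theorem~\ref{thm:mazya-plamenevskii} applies at those weights.

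The inclusion $\ker L_0 \subseteq \ker L_\delta$ for $\delta \in [0, \delta_0]$ is immediate from the monotonicity $\|\cdot\|_{C^{k,\alpha}_\delta} \leq \|\cdot\|_{C^{k,\alpha}_0}$ on the cylindrical end. For the reverse inclusion, let $a \in \ker L_\delta$; on the end, $L_\infty(\bar\pi_* a) = -(L - L_\infty)(\bar\pi_* a)$, whose right-hand side decays strictly faster than $a$ itself by the choice of $\delta_0$. Applying Theorem~\ref{thm:mazya-plamenevskii} at a finite sequence of non-critical weights between $\delta$ and $-\delta_0$, combined with Proposition~\ref{prop:translation-invariant-kernel} to extract eigenvalue contributions one by one, produces an asymptotic expansion
\[
  \bar\pi_* a = \sum_{\mu \in \spec D,\ -\delta_0 < \mu \leq \delta} e^{\mu t} a_\mu + r,
\]
where each $a_\mu$ is a $\mu$--eigensection of $D$ and $r = O(e^{-\delta_0 t})$. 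Since $\spec D \cap (0, \delta_0] = \emptyset$, all terms with $\mu > 0$ vanish, so $a$ is bounded and lies in $\ker L_0$.

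The map $\iota$ is now read off from this expansion applied at $\delta = 0$: the only possible eigenvalue in $(-\delta_0, 0]$ is $\mu = 0$, so I set $\iota(a) := a_0 \in \ker D$, with $\iota \equiv 0$ if $0 \notin \spec D$. Linearity follows from uniqueness of the expansion. The $C^0$ decay $\bar\pi_* a - \iota(a) = O(e^{-\delta_0 t})$ is built into the construction, and interior elliptic regularity on slabs $[t, t+1] \times X$ upgrades it to the claimed bound on all derivatives. The characterization $\ker \iota = \ker L_{-\delta_0}$ is immediate: $a \in \ker \iota$ iff the $\mu = 0$ term vanishes iff $\bar\pi_* a = O(e^{-\delta_0 t})$ iff $a \in C^{k,\alpha}_{-\delta_0}$. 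The main technical step is the exponential-peeling bootstrap producing the expansion; this is standard for strictly translation-invariant operators on cylinders, and here the perturbation $(L - L_\infty)(\bar\pi_* a)$ enters only as a faster-decaying source term, which the decay hypothesis on $L - L_\infty$ and Theorem~\ref{thm:mazya-plamenevskii} are tailor-made to absorb.
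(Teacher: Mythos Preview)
Your proposal is correct and follows essentially the same route as the paper: push $a$ to the cylinder, use the decay of $L-L_\infty$ to see that $L_\infty(\bar\pi_*a)$ lies in a strictly negative weight, invoke Theorem~\ref{thm:mazya-plamenevskii} to peel off a $C^\infty_{-\delta_0}$ correction, and read off the constant term via Proposition~\ref{prop:translation-invariant-kernel}. The only cosmetic difference is that the paper takes $\delta_0$ small enough that the coefficients of $L-L_\infty$ decay at rate $2\delta_0$, so a single application of Theorem~\ref{thm:mazya-plamenevskii} at weight $-\delta_0$ suffices, whereas you phrase the same step as a (here at most two-stage) bootstrap across intermediate weights.
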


\begin{proof}
  Let $\lambda_\pm$ be the first positive/negative eigenvalue of $D$. 
  Pick $0<\delta_0<\min\(\lambda_+,-\lambda_-\)$ such that the decay conditions made above hold with $-2\delta_0$ instead of $\delta$.
  Given $a\in \ker L_{\delta_0}$, set $\tilde a := \chi(t) \bar\pi_* a_\pm$ with $\chi$ as in \autoref{f:chi}.
  Then $L_\infty\tilde a \in C^\infty_{-\delta_0}$.
  By \autoref{thm:mazya-plamenevskii} there exists a unique $b\in C^\infty_{-\delta_0}$ such that $L_\infty (\tilde a-b)=0$.
  By \autoref{prop:translation-invariant-kernel} $(\tilde
  a-b)_0\in \ker D$ and $\tilde a - b - (\tilde a-b)_0 =
  O(e^{\lambda_-t})$ as $t$ tends to infinity.
  From this it follows that $a\in\ker L_0$; hence, the first part of the proposition.
  With $\iota(a):=(\tilde a-b)_0$ the second part also follows.
\end{proof}

\subsection{Hermitian--Yang--Mills connections over Calabi--Yau \texorpdfstring{$3$}{3}--folds}

Suppose $(Z,\omega,\Omega)$ is Calabi--Yau $3$--fold and $(Y:=\R\times Z,\phi:=\rd t\wedge\omega+\Re\Omega)$ is the corresponding cylindrical $\Gtwo$--manifold.
In this section we relate translation-invariant $\Gtwo$--instantons over $Y$ with Hermitian--Yang--Mills connections over $Z$.
Let $G$ denote a compact semi-simple Lie group.

\begin{definition}
  Let $(Z,\omega)$ be a Kähler manifold and let $E$ be a $G$--bundle over $Z$.
  A connection $A$ on $E$ is \defined{Hermitian--Yang--Mills (HYM) connection} if
  \begin{equation}
    \label{eq:hym}
    F_A^{0,2} = 0 \qandq \Lambda F_A = 0.
  \end{equation}
  Here $\Lambda$ is the dual of the Lefschetz operator $L:=\omega\wedge\cdot$.
\end{definition}

\begin{remark}
  We are mostly interested in the special case of $\U(n)$--bundles;
  however, for $G=\U(n)$, \eqref{eq:hym} is too restrictive as it forces $c_1(E)=0$.
  There are two customary ways to circumnavigate this issue:
  One is to change \eqref{eq:hym} and instead of the second part require that $\Lambda F_A$ be equal to a constant in $\fu(1)$, the centre of $\fu(n)$, which is determined by the degree of $\det E$;
  the other one is to work with the induced $\PU(n)$--bundle.
  These view points are essentially equivalent and we adopt the latter.
\end{remark}

\begin{remark}
  By the first part of \eqref{eq:hym} a HYM connection induces a holomorphic structure on $E$.
  If $Z$ is compact, then there is a one-to-one correspondence between gauge equivalence classes of HYM connections on $E$ and isomorphism classes of polystable holomorphic $G^\C$--bundles $\sE$ whose underlying topological bundle is $E$, see Donaldson~\cite{Donaldson1985} and Uhlenbeck--Yau~\cite{Uhlenbeck1986}.
\end{remark}

On a Calabi--Yau $3$--fold \eqref{eq:hym} is equivalent to
\begin{equation*}
  F_A\wedge\Im\Omega=0 \qandq F_A\wedge\omega\wedge\omega=0;
\end{equation*}
hence, using $\psi=*\phi=*(\rd t\wedge \omega+\Re\Omega)=\frac12\omega\wedge\omega-\rd t\wedge\Im\Omega$ one easily derives:

\begin{prop}[{\cite[Proposition 8]{SaEarp2011}}]
  \label{prop:HYM-G2-instanton}
  Denote by $\pi_Z\co Y\to Z$ the canonical projection.
  $A$ is a HYM connection if and only if $\pi_Z^*A$ is a $\Gtwo$--instanton.
\end{prop}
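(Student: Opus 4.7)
The plan is to compute the $G_2$-instanton equation $F_{\pi_Z^*A} \wedge \psi = 0$ explicitly on the cylinder and separate it into its $dt$-free and $dt$-containing pieces, then match these pieces term-by-term to the two HYM equations.

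First I would use the formula $\psi = *\phi = \tfrac{1}{2}\omega\wedge\omega - dt\wedge\Im\Omega$ (already recorded in the excerpt just above the proposition). Since $A$ is pulled back from $Z$, the curvature $F_{\pi_Z^*A} = \pi_Z^* F_A$ contains no $dt$. Expanding gives
\begin{equation*}
F_{\pi_Z^*A}\wedge\psi = \tfrac{1}{2}\,\pi_Z^*(F_A\wedge\omega\wedge\omega) - dt\wedge\pi_Z^*(F_A\wedge\Im\Omega).
\end{equation*}
Both terms on the right are of top degree on $Y$ but of different bidegree with respect to $dt$, so their sum vanishes if and only if each vanishes as a $6$-form on $Z$. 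Hence the $G_2$-instanton condition for $\pi_Z^*A$ is equivalent to the pair
\begin{equation*}
F_A\wedge\omega\wedge\omega = 0 \qandq F_A\wedge\Im\Omega = 0.
\end{equation*}

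Next I would translate these two identities into the HYM conditions on the Kähler manifold $Z$. For the first: on a Kähler $3$-fold any real $2$-form $\alpha$ satisfies $\alpha\wedge\omega^2 = \tfrac{1}{3}(\Lambda\alpha)\,\omega^3$ (a standard Kähler identity), so $F_A\wedge\omega\wedge\omega = 0$ is equivalent to $\Lambda F_A = 0$. For the second I would decompose $F_A = F_A^{2,0} + F_A^{1,1} + F_A^{0,2}$; since $\Omega$ is of type $(3,0)$ and $\dim_\C Z = 3$, the only surviving wedges of $F_A$ against $\Omega$ and $\bar\Omega$ are $F_A^{0,2}\wedge\Omega$ (of type $(3,2)$) and $F_A^{2,0}\wedge\bar\Omega$ (of type $(2,3)$). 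These two have distinct bidegree, so writing $\Im\Omega = \tfrac{1}{2i}(\Omega - \bar\Omega)$ shows $F_A\wedge\Im\Omega = 0$ is equivalent to each of them vanishing separately. Because $\Omega$ is nowhere zero it trivialises $K_Z$, so wedging with $\Omega$ is a pointwise isomorphism $\Lambda^{0,2} \to \Lambda^{3,2}$, and therefore $F_A^{0,2}\wedge\Omega = 0$ forces $F_A^{0,2} = 0$ (and then $F_A^{2,0} = 0$ follows by complex conjugation). Combining the two conditions recovers exactly the HYM system $F_A^{0,2}=0$, $\Lambda F_A = 0$.

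The implication in both directions follows from the above string of equivalences, which is the whole content of the proposition. There is no real obstacle: the proof is essentially pointwise linear algebra once one writes $\psi$ explicitly. The only place where mild care is needed is the bidegree bookkeeping in the $F_A\wedge\Im\Omega$ term, to make sure that the $(3,2)$ and $(2,3)$ pieces must vanish independently before concluding $F_A^{0,2}=0$.
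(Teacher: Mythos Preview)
Your argument is correct and matches the paper's approach exactly: the paper records just before the proposition that on a Calabi--Yau $3$--fold the HYM equations are equivalent to $F_A\wedge\Im\Omega=0$ and $F_A\wedge\omega\wedge\omega=0$, and then simply invokes the formula $\psi=\tfrac12\omega\wedge\omega-\rd t\wedge\Im\Omega$, leaving the rest to the reader (and the cited reference). You have supplied precisely those details. One small slip: the two pieces of $F_{\pi_Z^*A}\wedge\psi$ are $6$--forms on the $7$--manifold $Y$, not top-degree forms; the separation into $\rd t$--free and $\rd t$--containing parts is still valid, so this does not affect the argument.
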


In general, if $A$ is a $\Gtwo$--instanton on a $G$--bundle $E$ over a $\Gtwo$--manifold $(Y,\phi)$, then the moduli space $\sM$ of $\Gtwo$--instantons near $[A]$, i.e., the space of gauge equivalence classes of $\Gtwo$--instantons near $[A]$ is the space of small solutions $(\xi,a)\in\(\Omega^0\oplus\Omega^1\)(Y,\fg_E)$ of the system of equations
\begin{equation*}
  \rd_{A}^*a = 0 \qandq
  \rd_{A+a}\xi + *(F_{A+a}\wedge\psi) = 0
\end{equation*}
modulo the action of $\Gamma_A\subset \sG$, the stabiliser of $A$ in the gauge group of $E$---assuming $Y$ is compact or appropriate assumptions are made regarding the growth of $\xi$ and $a$.
The linearisation  $L_A\co\left(\Omega^0\oplus\Omega^1\right)(Y,\fg_E)\to\left(\Omega^0\oplus\Omega^1\right)(Y,\fg_E)$ of this equation is
\begin{equation}
  \label{eq:L}
  L_A := \begin{pmatrix}
    & \rd_A^* \\
    \rd_A & *(\psi\wedge\rd_A)
  \end{pmatrix}.
\end{equation}
It controls the infinitesimal deformation theory of $A$.

\begin{definition}
  \label{def:irreducible-unobstructed}
  $A$ is called \defined{irreducible and unobstructed} if $L_A$ is surjective.
\end{definition}

If $A$ is irreducible and unobstructed, then $\sM$ is smooth at $[A]$.
If $Y$ is compact, then $L_A$ has index zero; hence, is surjective if and only if it is invertible; therefore, irreducible and unobstructed $\Gtwo$--instantons form isolated points in $\sM$.
If $Y$ is non-compact, the precise meaning of $\sM$ and $L_A$ depends on the growth assumptions made on $\xi$ and $a$ and $\sM$ may very well be positive-dimensional.

\begin{prop}
  \label{prop:L-dt-D}
  If $A$ is HYM connection on a bundle $E$ over a $\Gtwo$--manifold $Y:=\R\times Z$ as  in \autoref{prop:HYM-G2-instanton}, then the operator $L_{\pi_Z^*A}$ defined in \eqref{eq:L} can be written as
  \begin{equation*}
    L_{\pi_Z^*A}= \tilde I \del_t
    + D_{A}
  \end{equation*}
  where 
  \begin{equation*}
    \tilde I:=
    \begin{pmatrix}
       & -1 & \\
      1 &   & \\
       && I
    \end{pmatrix}
  \end{equation*}
  and $D_A\co\left(\Omega^0\oplus\Omega^0\oplus\Omega^1\right)(Z,\fg_E)
  \to \left(\Omega^0\oplus\Omega^0\oplus\Omega^1\right)(Z,\fg_E)$
  is defined by
  \begin{equation}
    \label{eq:D}
    D_A:=
    \begin{pmatrix}
      && \rd_A^*\\
      && \Lambda\rd_A\\
      \rd_A & -I\rd_A & -*(\Im\Omega\wedge\rd_A)
    \end{pmatrix}.
  \end{equation}
  (Note that $TY = \underline\R\oplus \pi_Z^*TZ$.)
\end{prop}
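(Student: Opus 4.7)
The plan is a direct local computation. On $Y = \R \times Z$, identify
\[ (\Omega^0 \oplus \Omega^1)(Y, \fg_E) \;\iso\; \Omega^0(Z, \fg_E) \,\oplus\, (\rd t)\,\Omega^0(Z, \fg_E) \,\oplus\, \Omega^1(Z, \fg_E), \]
where all sections on the right are allowed to depend on $t$; under this identification a pair $(\xi, a)$ on $Y$ corresponds to a triple $(\xi, f, \alpha)$ with $a = f\,\rd t + \alpha$. The covariant exterior derivative splits as $\rd_A = \rd t \wedge \del_t + \rd_A^Z$, with $\rd_A^Z$ denoting the exterior derivative along $Z$, and the product Hodge star satisfies $*(\rd t \wedge \beta) = *_Z \beta$ and $*\beta = (-1)^k\,\rd t \wedge *_Z \beta$ for $\beta \in \Omega^k(Z)$.

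With these in place the computation is essentially bookkeeping. A short calculation gives
\[ \rd_A a = \rd t \wedge (\del_t \alpha - \rd_A^Z f) + \rd_A^Z \alpha, \qquad \rd_A^* a = -\del_t f + (\rd_A^Z)^* \alpha, \qquad \rd_A \xi = (\del_t \xi)\,\rd t + \rd_A^Z \xi. \]
Using $\psi = \tfrac12 \omega \wedge \omega - \rd t \wedge \Im \Omega$, one expands $\psi \wedge \rd_A a$ into the three non-vanishing terms. The key algebraic inputs are the two pointwise Kähler identities on the Calabi--Yau $3$--fold $Z$:
\[ *_Z(\tfrac12 \omega^2 \wedge \gamma) = I \gamma \quad \text{for } \gamma \in \Omega^1(Z), \qquad *_Z(\tfrac12 \omega^2 \wedge F) = \Lambda F \quad \text{for } F \in \Omega^2(Z), \]
both of which follow from the standard identity $*\omega = \omega^2/2$ on a Kähler $3$--fold together with the $(p,q)$--type decomposition. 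Combining these yields
\[ *(\psi \wedge \rd_A a) = I \del_t \alpha - I \rd_A^Z f + (\Lambda \rd_A^Z \alpha)\,\rd t - *_Z(\Im \Omega \wedge \rd_A^Z \alpha). \]

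Finally, one reads off the answer. Substituting into the matrix \eqref{eq:L} defining $L_{\pi_Z^*A}$ and separating in each row the coefficient of $\del_t$ from the $t$--independent remainder, the $\del_t$--terms assemble exactly into the endomorphism $\tilde I$ acting on $(\xi, f, \alpha)$, while the residual terms are precisely the entries of the matrix $D_A$ in \eqref{eq:D}. The only step that requires attention is the sign bookkeeping in the product Hodge star combined with the factor $-1$ from $\rd_A^* = -{*}\rd_A{*}$ on $1$--forms in seven dimensions; no substantive obstacle arises beyond this. Observe that the HYM hypothesis on $A$ is not used in the computation itself---the identity holds for any connection on $E$---and serves only to guarantee, via Proposition~\ref{prop:HYM-G2-instanton}, that $\pi_Z^* A$ is an honest $\rG_2$--instanton and thus that $L_{\pi_Z^* A}$ is the linearisation at a genuine solution.
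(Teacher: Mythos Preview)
Your proof is correct and follows exactly the approach of the paper: a direct computation plugging $\psi=\tfrac12\omega\wedge\omega-\rd t\wedge\Im\Omega$ into the definition of $L_{\pi_Z^*A}$ and invoking the Kähler identity $I=\tfrac12*(\omega^2\wedge\cdot)$ on $1$--forms. You spell out more of the intermediate steps (including the companion identity $*_Z(\tfrac12\omega^2\wedge F)=\Lambda F$ on $2$--forms and the Hodge-star bookkeeping on the product), but the argument is the same.
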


\begin{proof}
  Plugging $\psi=\frac12\omega\wedge\omega - \rd t\wedge\Im\Omega$ into the definition of $L_{\pi_Z^*A}$ and using the fact that the complex structure acts via
  \begin{equation}\label{eq:I-1-forms}
    I=\frac12*(\omega\wedge\omega\wedge\cdot)
  \end{equation}
  on $\Omega^1(Z,\fg_E)$ the assertion follows by a direct computation.
\end{proof}

\begin{definition}
  \label{def:cH}
  Let $A$ be a HYM connection on a $G$--bundle $E$ over a Kähler
  manifold $(Z,\omega)$.  Set
  \begin{equation*}
    \cH^i_A := \ker\(\delbar_A\oplus\delbar_A^*\co\Omega^{0,i}\(Z,\fg^\C_E\)\to\(\Omega^{0,i+1}\oplus\Omega^{0,i-1}\)\(Z,\fg^\C_E\)\).
  \end{equation*}
  $\cH^0_A$ is called the space of \defined{infinitesimal automorphisms} of $A$ and $\cH^1_A$ is called the space of \defined{infinitesimal deformations} of $A$.
\end{definition}

\begin{remark}
  If $Z$ is compact and $A$ is a connection on a $\PU(n)$--bundle $E$ corresponding to a holomorphic vector bundle $\sE$, then $\cH^i_A\iso H^i(Z,\sEnd_0(\sE))$.
\end{remark}

\begin{prop}
  \label{prop:kerDA-Hi}
  If $(Z,\omega,\Omega)$ is a compact Calabi--Yau $3$--fold and $A$ is a HYM connection on a $G$--bundle $E\to Z$, then
  \begin{equation*}
    \ker D_A\iso\cH^0_A\oplus\cH^1_A
  \end{equation*}
  where $D_A$ is as in \eqref{eq:D}.
\end{prop}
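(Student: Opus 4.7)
The plan is to construct an explicit real-linear isomorphism
\[
\Phi \co \ker D_A \longrightarrow \cH^0_A \oplus \cH^1_A,
\qquad (\xi_0, \xi_1, a) \longmapsto \bigl(\xi_0 + i\xi_1,\, a^{0,1}\bigr),
\]
with candidate inverse $(\xi, \alpha) \mapsto (\Re\xi,\, \Im\xi,\, \alpha + \bar\alpha)$, where $\bar\alpha \in \Omega^{1,0}(Z,\cEnd_0(\cE))$ denotes the conjugate of $\alpha$ under the real structure fixing $\fg_E$. Bijectivity of $\Phi$ is then immediate, so the proof reduces to showing that the three component equations defining $\ker D_A$ are jointly equivalent to the harmonicity conditions $\delbar_A \xi = 0$, $\delbar_A \alpha = 0$, $\delbar_A^* \alpha = 0$, where $\xi := \xi_0 + i\xi_1$ and $\alpha := a^{0,1}$.

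The scalar equations $\rd_A^* a = 0$ and $\Lambda\rd_A a = 0$ will be handled by the Nakano identities $[\Lambda, \del_A] = i\delbar_A^*$ and $[\Lambda, \delbar_A] = -i\del_A^*$: using bidegree considerations (each of $\del_A^*, \delbar_A^*$ annihilates one piece of $a = a^{1,0} + a^{0,1}$), a direct computation gives
\[
\rd_A^* a + i\Lambda\rd_A a = 2\delbar_A^* a^{0,1},
\]
which together with its complex conjugate (valid since $a$ is real) is equivalent to $\delbar_A^* \alpha = 0$.

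For the $1$-form equation $\rd_A \xi_0 - I\rd_A \xi_1 - *(\Im\Omega \wedge \rd_A a) = 0$, using $I|_{\Omega^{1,0}} = i$ and $I|_{\Omega^{0,1}} = -i$ I would rewrite the left-hand side as $\del_A \bar\xi + \delbar_A \xi$. On the right-hand side, $\Omega \wedge \rd_A a$ survives only in bidegree $(3,2)$, so the expression simplifies to $\tfrac{1}{2i}\bigl[*(\Omega \wedge \delbar_A \alpha) - *(\bar\Omega \wedge \del_A a^{1,0})\bigr]$, and projecting to bidegree $(0,1)$ reduces equation (3) to the single complex identity
\[
\delbar_A \xi \;=\; \tfrac{i}{2}\,*\bigl(\bar\Omega \wedge \del_A a^{1,0}\bigr).
\]
The main technical obstacle is disentangling the two sides. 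For this I would pair with $\xi$ and integrate by parts on the compact $Z$, applying the Kähler identity $\delbar_A^* = -*\del_A*$, the fact that $** = -\id$ on $\Omega^{2,3}$, and the Leibniz rule, to obtain
\[
\delbar_A^*\bigl[*(\bar\Omega \wedge \del_A a^{1,0})\bigr] \;=\; *\del_A(\bar\Omega \wedge \del_A a^{1,0}) \;=\; \pm\,*\bigl(\bar\Omega \wedge \del_A^2 a^{1,0}\bigr),
\]
since $\del\bar\Omega = 0$ because $\bar\Omega$ is anti-holomorphic of top degree. The remaining term vanishes because $\del_A^2 = F_A^{2,0} = 0$ by the HYM hypothesis. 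Hence $\|\delbar_A \xi\|_{L^2}^2 = 0$, forcing $\delbar_A \xi = 0$; the residual identity $\bar\Omega \wedge \del_A a^{1,0} = 0$ together with the fibrewise isomorphism $\bar\Omega\wedge\cdot \co \Omega^{2,0} \to \Omega^{2,3}$ then yields $\del_A a^{1,0} = 0$, and hence $\delbar_A \alpha = 0$ by conjugation.

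Conversely, if $\xi$ and $\alpha$ satisfy the three harmonic conditions, then both sides of each equation defining $\ker D_A$ vanish identically, so the candidate inverse of $\Phi$ indeed lands in $\ker D_A$. The two essential structural inputs, used precisely in the integration-by-parts step, are the Calabi--Yau hypothesis (through $\rd\Omega = 0$ and the fibrewise isomorphism $\bar\Omega\wedge\cdot$) and the HYM hypothesis (through $F_A^{2,0} = 0$).
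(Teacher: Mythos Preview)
Your proof is correct and follows essentially the same strategy as the paper: integrate by parts on the compact $Z$ and use the HYM condition to decouple the scalar piece $(\xi_0,\xi_1)$ from the $1$--form piece $a$, then read off harmonicity. The only difference is packaging: the paper stays in real notation, applying $\rd_A^*$ and $\rd_A^*\circ I$ to the $1$--form equation and using the identities $F_A\wedge\omega^2=0$, $F_A\wedge\Im\Omega=0$ to kill the cross terms, whereas you pass to $(p,q)$--types, project to the $(0,1)$--part, and use $\del\bar\Omega=0$ together with $F_A^{2,0}=0$. These are the same computation in different coordinates. (One small slip: with the standard Nakano signs your combination gives $\rd_A^*a+i\Lambda\rd_Aa=2\del_A^*a^{1,0}$ rather than $2\delbar_A^*a^{0,1}$, but since $a$ is real these are conjugate and the conclusion $\delbar_A^*\alpha=0$ is unaffected.)
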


\begin{proof}
  If $s\in\cH^0_A$ and $\alpha\in\cH^1_A$, then $D_A(\Re s,\Im s,\alpha+\bar\alpha)=0$.
  Conversely, if $(\xi,\eta,a)\in\ker D_A$, then applying $\rd_A^*$ (resp.~$\rd_A^*\circ I$) to
  \begin{equation*}
    \rd_A\xi-I\rd_A\eta-*(\Im\Omega\wedge\rd_Aa)=0,
  \end{equation*}
  using \eqref{eq:I-1-forms}, taking the $L^2$ inner product with $\xi$ (resp.~$\eta$) and integrating by parts yields $\rd_A\xi=0$ (resp.~$\rd_A\eta=0$).
  Thus $\xi+i\eta\in\cH^0_A$ and
  \begin{equation*}
    \rd_A^* a=0, \quad \Lambda\rd_A a=0 \quad\text{and}\quad \Im\Omega\wedge\rd_A a=0
  \end{equation*}
  which implies $\alpha:=a^{0,1}\in\cH^1_A$ because $\rd_A^* = \del_A^* + \delbar_A^*$ and $\Lambda\rd_A = -i \del_A^* + i\delbar_A^*$.
\end{proof}

\subsection{\texorpdfstring{$\Gtwo$}{G2}--instantons over ACyl \texorpdfstring{$\Gtwo$}{G2}--manifolds}

\begin{definition}
  \label{def:G2instanton_asymptotic}
  Let $(Y,\phi)$ be an ACyl $\Gtwo$--manifold with asymptotic cross-section $(Z,\omega,\Omega)$.
  Let $A_\infty$ be a HYM connection on a $G$--bundle $E_\infty\to Z$.  A $\Gtwo$--instanton $A$ on a $G$--bundle $E\to Y$ is called \defined{asymptotic} to $A_\infty$ if there exist a constant $\delta<0$ and a bundle isomorphism $\bar\pi\co E|_{Y\setminus K} \to E_\infty$ covering $\pi\co Y\setminus K \to \R_+\times Z$ such that
  \begin{equation}
    \label{eq:connection-decay}
    \nabla^k (\bar\pi_*A-A_\infty) = O(e^{\delta t})
  \end{equation}
  for all $k\in\N_0$.
  Here by a slight abuse of notation we also denote by $E_\infty$ and $A_\infty$ their respective pullbacks to $\R_+\times Z$.
\end{definition}

\begin{definition}
  \label{def:ag2i}
  Let $(Y,\phi)$ be an ACyl $\Gtwo$--manifold and let $A$ be a  $\Gtwo$--instanton on a $G$--bundle over $(Y,\phi)$ asymptotic to $A_\infty$.
  For $\delta\in\R$ we set
  \begin{equation*}
    \cT_{A,\delta} := \ker L_{A,\delta} = \left\{ \ua \in \ker L_A : \nabla^k \bar\pi_* \ua = O(e^{\delta t}) ~\text{for all}~k\in\N_0 \right\}.
  \end{equation*}
  where $\ua=(\xi,a)\in\left(\Omega^0\oplus\Omega^1\right)(Y,\fg_E)$.
  Set $\cT_A:=\cT_{A,0}$.
\end{definition}

\begin{prop}
  \label{prop:iota}
  Let $(Y,\phi)$ be an ACyl $\Gtwo$--manifold and let $A$ be a $\Gtwo$--instanton asymptotic to $A_\infty$.
  Then there is a constant $\delta_0>0$ such that for all $\delta\in[0,\delta_0]$, $\cT_{A,\delta}=\cT_A$ and there is a linear map $\iota\co \cT_{A} \to \cH^0_{A_\infty}\oplus \cH^1_{A_\infty}$ such that
  \begin{equation*}
    \nabla^k \(\bar\pi_* \ua-\iota(\ua)\)=O(e^{-\delta_0 t})
  \end{equation*}
  for all $k\in\N_0$; in particular,
  \begin{equation*}
    \ker\iota=\cT_{A,-\delta_0}.
  \end{equation*}
\end{prop}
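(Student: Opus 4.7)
The plan is to apply the abstract Proposition~\ref{prop:abstract-iota} to $L_A$, and then identify the kernel on the cross-section with $\cH^0_{A_\infty}\oplus\cH^1_{A_\infty}$ using Proposition~\ref{prop:kerDA-Hi}.

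First I would verify that $L_A$ fits the framework of Section~3.1.2. The operator is elliptic and formally self-adjoint (this is standard for the linearisation of the $\rG_2$--instanton equation). By Proposition~\ref{prop:L-dt-D}, its translation-invariant model on $\R\times Z$ is $\tilde I\partial_t+D_{A_\infty}$, with $D_{A_\infty}$ elliptic and self-adjoint (self-adjointness follows by comparing $L_{\pi_Z^*A_\infty}^*=-\partial_t\tilde I^*+D_{A_\infty}^*=\tilde I\partial_t+D_{A_\infty}^*$ with $L_{\pi_Z^*A_\infty}$, using $\tilde I^*=-\tilde I$). The exponential decay~\eqref{eq:connection-decay} from Definition~\ref{def:G2instanton_asymptotic}, together with the polynomial dependence of $L$ on the connection coefficients and the comparable decay of $\phi$ towards its cylindrical model, ensures that the coefficients of $\bar\pi_*L_A$ approach those of $\tilde I\partial_t+D_{A_\infty}$ at an exponential rate, as required.

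Second, there is a cosmetic mismatch: Proposition~\ref{prop:abstract-iota} is written for asymptotic operators of the form $\partial_t-D$, whereas our model is $\tilde I\partial_t+D_{A_\infty}$. I would handle this by noting that $\tilde I$ is a constant invertible bundle isomorphism, so multiplying through by $\tilde I^{-1}=-\tilde I$ yields the equivalent operator $\partial_t-D'$ with $D':=\tilde I D_{A_\infty}$, having the same kernel on each weighted space. Inspecting the proof of Proposition~\ref{prop:abstract-iota} shows that only two ingredients are used: the isomorphism property from Theorem~\ref{thm:mazya-plamenevskii} for non-critical weights, and the asymptotic expansion from Proposition~\ref{prop:translation-invariant-kernel}. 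Both go through for the operator $\tilde I\partial_t+D_{A_\infty}$ because $D_{A_\infty}$ is self-adjoint elliptic on the compact cross-section, so the indicial roots form a discrete real set (namely $\spec D_{A_\infty}$, interpreting the generalised eigenvalue problem $D_{A_\infty}v=-\delta\tilde Iv$ via the spectral decomposition of $D_{A_\infty}$).

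Third, once the abstract machinery is in place, it produces a constant $\delta_0>0$ such that $\cT_{A,\delta}=\cT_A$ for all $\delta\in[0,\delta_0]$, together with a linear map $\iota_0\colon\cT_A\to\ker D_{A_\infty}$ satisfying $\nabla^k(\bar\pi_*\ua-\iota_0(\ua))=O(e^{-\delta_0 t})$ for every $k\in\N_0$, and $\ker\iota_0=\cT_{A,-\delta_0}$. Composing $\iota_0$ with the isomorphism $\ker D_{A_\infty}\cong\cH^0_{A_\infty}\oplus\cH^1_{A_\infty}$ from Proposition~\ref{prop:kerDA-Hi} yields the map $\iota$ in the statement, with all asserted properties carried over intact.

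I expect the main (though minor) obstacle to be the bookkeeping around the $\tilde I$--factor in the model operator, i.e., confirming that the abstract framework truly applies verbatim after this rescaling; everything else is a direct citation.
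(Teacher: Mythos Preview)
Your approach is essentially the same as the paper's: apply Proposition~\ref{prop:abstract-iota} after rewriting the model operator as $\tilde I(\partial_t-\tilde I D_{A_\infty})$, and then invoke Proposition~\ref{prop:kerDA-Hi} to identify $\ker D_{A_\infty}\cong\cH^0_{A_\infty}\oplus\cH^1_{A_\infty}$ (using that $\tilde I$ is invertible, so $\ker(\tilde I D_{A_\infty})=\ker D_{A_\infty}$).

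There is one imprecision worth fixing. After factoring out $\tilde I$, the operator to which Proposition~\ref{prop:abstract-iota} must apply is $\partial_t-D'$ with $D'=\tilde I D_{A_\infty}$, and the hypothesis there is that $D'$ be self-adjoint. You verify instead that $D_{A_\infty}$ is self-adjoint (correct, via $L^*=L$ and $\tilde I^*=-\tilde I$), and then argue loosely that the indicial roots are real. But self-adjointness of $D_{A_\infty}$ alone does not give this: what is needed is the additional fact that $\tilde I$ and $D_{A_\infty}$ \emph{anti-commute}, so that $(\tilde I D_{A_\infty})^*=-D_{A_\infty}\tilde I=\tilde I D_{A_\infty}$. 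This anti-commutation is a Clifford-type identity (reflecting that $L_A$ is a Dirac operator and $\tilde I$ is Clifford multiplication by $\partial_t$); the paper simply asserts that $\tilde I D_{A_\infty}$ is self-adjoint. Once you note this, your ``cosmetic mismatch'' really is cosmetic and your hand-wave about generalised eigenvalue problems becomes unnecessary.
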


\begin{proof}
  By \autoref{prop:L-dt-D}, $L_A$ is asymptotic to $\tilde I(\del_t-\tilde I D_A)$.
  Since $\tilde I D_A$ is self-adjoint and $\ker \tilde I D_A = \ker D_A$, we can apply \autoref{prop:abstract-iota} to obtain a linear map $\iota\co \cT_{A} \to \ker D_{A_\infty}$ and use the isomorphism $\ker D_{A_\infty} \iso \cH^0_{A_\infty}\oplus \cH^1_{A_\infty}$ from \autoref{prop:kerDA-Hi}.
\end{proof}

\begin{prop}
  \label{prop:lagrangian}
  Let $(Y,\phi)$ be an ACyl $\Gtwo$--manifold and let $A$ be a $\Gtwo$--instanton asymptotic to $A_\infty$.  
  Then
  \begin{equation*}
    \dim\im\iota =  \frac12 \dim\(\cH^0_{A_\infty}\oplus\cH^1_{A_\infty}\)
  \end{equation*}
  and, if $\cH^0_{A_\infty}=0$, then $\im\iota\subset\cH^1_{A_\infty}$ is Lagrangian with respect to the symplectic structure on $\cH^1_{A_\infty}$ induced by $\omega$.
\end{prop}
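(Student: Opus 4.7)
The plan is to exhibit a natural symplectic form $B$ on $\ker D_{A_\infty}$ built from the asymptotic endomorphism $\tilde I$, and then show that $\im\iota$ is \emph{both} isotropic and coisotropic with respect to $B$, hence Lagrangian of half-dimension. Concretely, I would define $B(v,w):=\langle\tilde I v,w\rangle_{L^2(Z)}$ on $\ker D_{A_\infty}$. Since $\tilde I^2=-1$ and $\tilde I$ is skew with respect to the natural metric on $\Omega^0\oplus\Omega^0\oplus\Omega^1$, the form $B$ is skew and non-degenerate on all of $L^2(Z)$. A short algebraic check, using Proposition~\ref{prop:kerDA-Hi} to identify $\ker D_{A_\infty}\iso\cH^0_{A_\infty}\oplus\cH^1_{A_\infty}$, shows that $\tilde I$ preserves $\ker D_{A_\infty}$; hence $B$ restricts to a symplectic form there.

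Isotropy of $\im\iota$ will follow from Green's formula applied to $L_A$ on the truncated ACyl piece $Y_T:=K\cup\pi^{-1}([0,T]\times Z)$. For $\ua,\ub\in\cT_A$, using $L_A=\tilde I\partial_t+D_A$ asymptotically together with the self-adjointness of $L_A$, the interior contribution to $\langle L_A\ua,\ub\rangle_{L^2(Y_T)}-\langle\ua,L_A\ub\rangle_{L^2(Y_T)}$ vanishes, and the only surviving term is the boundary integral $\int_{\{T\}\times Z}\langle\tilde I\bar\pi_*\ua,\bar\pi_*\ub\rangle$. Letting $T\to\infty$ and invoking the asymptotic expansion in Proposition~\ref{prop:iota}, this converges to $B(\iota(\ua),\iota(\ub))$, which is therefore zero.

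For coisotropy, I would take $w\in\ker D_{A_\infty}$ with $B(w,\iota(\ua))=0$ for all $\ua\in\cT_A$ and try to realise $w$ as $\iota(\ua_w)$ for some $\ua_w\in\cT_A$. Extending $w$ to a smooth cutoff $\tilde w$ on $Y$, one has $L_A\tilde w\in C^\infty_{-\delta_0}$, since $D_{A_\infty}w=0$ and $A$ is asymptotic to $A_\infty$ with exponential decay. By Proposition~\ref{prop:non-critical-fredholm}, $L_{A,-\delta_0}$ is Fredholm, and the self-adjointness of $L_A$ yields the $L^2$-duality $\coker L_{A,-\delta_0}\iso\ker L_{A,\delta_0}=\cT_A$. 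Thus $L_A\tilde w$ lies in the range of $L_{A,-\delta_0}$ iff it is $L^2$-orthogonal to $\cT_A$, which the same Green's-formula computation shows equals $B(w,\iota(\ua))=0$. Solving $L_A b=L_A\tilde w$ with $b\in C^\infty_{-\delta_0}$ and setting $\ua_w:=\tilde w-b\in\cT_A$ then gives $\iota(\ua_w)=w$.

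Combining the two steps yields $\im\iota=(\im\iota)^{\perp_B}$, so $\dim\im\iota=\tfrac12\dim(\cH^0_{A_\infty}\oplus\cH^1_{A_\infty})$. When $\cH^0_{A_\infty}=0$, only the bottom block of $\tilde I$ contributes, and this block is the Kähler complex structure $I$ acting on $\Omega^1$; hence $B$ restricts to the symplectic form on $\cH^1_{A_\infty}$ induced by $\omega$, proving the Lagrangian claim. The main technical obstacle will be carefully justifying, in the ACyl weighted-Hölder setting, both the $L^2$-duality $\coker L_{A,-\delta_0}\iso\ker L_{A,\delta_0}$ and the uniform decay needed to pass the boundary integral in Green's formula to the limit $T\to\infty$.
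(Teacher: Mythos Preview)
Your approach is correct and genuinely different from the paper's. The paper obtains the dimension formula in one line by invoking the Lockhart--McOwen wall-crossing formula, $\dim\im\iota=\ind L_{A,\delta}=\tfrac12\dim\ker D_{A_\infty}$, and then proves isotropy (only in the case $\cH^0_{A_\infty}=0$) by a direct Stokes computation
\[
\tfrac12\int_Z\langle\iota(a)\wedge\iota(b)\rangle\wedge\omega^2=\int_Y\rd\bigl(\langle a\wedge b\rangle\wedge\psi\bigr)=0,
\]
using the linearised instanton equation $\rd_Aa\wedge\psi=0$. You instead run the classical Atiyah--Patodi--Singer ``Cauchy data'' argument: the boundary term in Green's formula for the formally self-adjoint operator $L_A$ is the symplectic form $B=\langle\tilde I\cdot,\cdot\rangle$, isotropy follows immediately from $L_A\ua=L_A\ub=0$, and coisotropy from identifying $\coker L_{A,-\delta_0}$ with $\cT_A$ via duality. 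The dimension formula then drops out of $\im\iota=(\im\iota)^{\perp_B}$ rather than being an input.

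Your route is longer but more self-contained (no appeal to the index jump formula) and slightly more general: your isotropy argument uses only self-adjointness of $L_A$ and works regardless of whether $\cH^0_{A_\infty}$ vanishes, so you actually prove that $\im\iota$ is Lagrangian in $\bigl(\ker D_{A_\infty},B\bigr)$ unconditionally. The paper's Stokes argument, as written, relies on $\cT_A\subset\Omega^1(Y,\fg_E)$ and hence on $\cH^0_{A_\infty}=0$. The technical point you flag---the identification $\coker L_{A,-\delta_0}\cong\ker L_{A,\delta_0}$ in the H\"older setting---is standard: the cokernel is finite-dimensional and consists of smooth sections by elliptic regularity, so one may pass to weighted Sobolev spaces where the $L^2$-duality is immediate, and then use $\ker L_{A,\delta_0}=\cT_A$ from Proposition~\ref{prop:iota}.
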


\begin{proof}
  By \citet[Theorem~7.4]{Lockhart1985} for $0<\delta\ll 1$
 \begin{equation*}
   \dim \im\iota = \ind L_{A,\delta} = \frac12\dim\ker D_{A_\infty}.
 \end{equation*}
  Suppose $\cH^0_{A_\infty}=0$. 
  If $(\xi,a) \in \cT_A$, then $\rd_A^*\rd_A\xi=0$ and, by \autoref{prop:iota}, $\xi$ decays exponentially.
  Integration by parts shows that $\rd_A\xi=0$;
  hence, $\xi=0$.
  Therefore, $\cT_A\subset\Omega^1(Y,\fg_E)$

  We show that $\im\iota$ is isotropic:
  For $a,b\in\cT_A$
  \begin{equation*}
    \frac12 \int_Z \<\iota(a)\wedge\iota(b)\>\wedge\omega\wedge\omega = \int_Y \rd \(\<a\wedge b\>\wedge\psi\) = 0
  \end{equation*}
  because $\rd_A a\wedge \psi=\rd_A b\wedge\psi = 0$.
\end{proof}

\subsection{Gluing \texorpdfstring{$\Gtwo$}{G2}--instantons over ACyl \texorpdfstring{$\Gtwo$}{G2}--manifolds}

In the situation of \autoref{prop:lagrangian}, if $\ker\iota=0$ and $\cH^0_{A_\infty}=0$, then one can show that the moduli space $\sM(Y)$ of $\Gtwo$--instantons near $[A]$ which are asymptotic to some HYM connection is smooth.
Although the moduli space $\sM(Z)$ of HYM connections near $[A_\infty]$ is not necessarily smooth, formally, it still makes sense to talk about its symplectic structure and view $\sM(Y)$ as a Lagrangian submanifold.
The following theorem shows, in particular, that transverse intersections of a pair of such Lagrangians give rise to $\Gtwo$--instantons.

\begin{theorem}
  \label{thm:gluing}
  Let $(Y_\pm,\phi_\pm)$ be a pair of ACyl $\Gtwo$--manifolds that match via $f\co Z_+\to Z_-$.
  Denote by $(Y_T,\phi_T)_{T\geq T_0}$ the resulting family of compact $\Gtwo$--manifolds arising from the construction in \autoref{sec:gluing-acyl-g2-manifolds}.
  Let $A_\pm$ be a pair of $\Gtwo$--instantons on $E_\pm$ over $(Y_\pm,\phi_\pm)$ asymptotic to $A_{\infty,\pm}$.
  Suppose that the following hold:
  \begin{itemize}
  \item There is a bundle isomorphism $\bar f\co E_{\infty,+}\to E_{\infty,-}$  covering $f$ such that $\bar f^*A_{\infty,-}=A_{\infty,+}$,
  \item The maps $\iota_\pm\co \cT_{A_\pm}\to \ker D_{A_{\infty,\pm}}$ constructed in \autoref{prop:iota} are injective and their images intersect trivially
    \begin{equation}
      \label{eq:trivial-intersection-2}
     \im\(\iota_+\) \cap \im\(\bar f^*\circ\iota_-\)=\{0\} \subset \cH^0_{A_{\infty,+}}\oplus \cH^1_{A_{\infty,+}}.
    \end{equation}
  \end{itemize}
  Then there exists $T_1\geq T_0$ and for each $T\geq T_1$ there exists an irreducible and unobstructed $\Gtwo$--instanton $A_T$ on a $G$--bundle\ $E_T$ over $(Y_T,\phi_T)$.
\end{theorem}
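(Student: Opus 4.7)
The plan is a pregluing followed by a Newton iteration argument. First, construct an approximate $\rG_2$--instanton $\tilde A_T$ on a $\PU(n)$--bundle $E_T\to Y_T$ by taking $A_\pm$ on $Y_{T,\pm}$ and interpolating the differences $\bar\pi_{+,*}A_+-A_{\infty,+}$ and $\bar f^*(\bar\pi_{-,*}A_--A_{\infty,-})$ against the cutoff $\chi(\cdot-T+1)$ from Section~\ref{f:chi} over the neck $[T,T+1]\times Z_+$; the topological type of $E_T$ is determined by $\bar f$. The decay~\eqref{eq:connection-decay} on each side together with the torsion bound~\eqref{eq:eta-estimate} yields
\begin{equation*}
\|*(F_{\tilde A_T}\wedge\psi_T)\|_{C^{0,\alpha}}=O(e^{-\delta T})
\end{equation*}
for some $\delta>0$ and all $T$ sufficiently large.

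After imposing the Coulomb gauge $\rd_{\tilde A_T}^*a=0$, finding a $\rG_2$--instanton of the form $\tilde A_T+a$ reduces to solving
\begin{equation*}
L_{\tilde A_T}\ua_T + Q_{\tilde A_T}(\ua_T) + \be_T = 0
\end{equation*}
for $\ua_T=(\xi_T,a_T)\in(\Omega^0\oplus\Omega^1)(Y_T,\fg_{E_T})$, where $L_{\tilde A_T}$ is as in~\eqref{eq:L}, $\be_T$ is the preglued error above, and $Q_{\tilde A_T}$ is a pointwise quadratic satisfying the usual uniform Lipschitz estimate on $C^{1,\alpha}$--balls. Given a right inverse $P_T$ of $L_{\tilde A_T}$ whose operator norm $C^{0,\alpha}\to C^{1,\alpha}$ is bounded independently of $T\gg 1$, a contraction mapping argument on a small $C^{1,\alpha}$--ball produces a unique solution $\ua_T$ of size $O(e^{-\delta T})$, and $A_T:=\tilde A_T+a_T$ is the desired $\rG_2$--instanton. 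Because $L_{A_T}$ differs from $L_{\tilde A_T}$ by a zeroth-order operator of norm $O(e^{-\delta T})$, it too is invertible, so $A_T$ is irreducible and unobstructed in the sense of Definition~\ref{def:irreducible-unobstructed}.

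The heart of the proof is producing the uniform right inverse $P_T$. I would argue by contradiction: suppose there exist $T_i\to\infty$ and $\ua_i\perp\ker L_{\tilde A_{T_i}}$ with $\|\ua_i\|_{C^{1,\alpha}}=1$ and $\|L_{\tilde A_{T_i}}\ua_i\|_{C^{0,\alpha}}\to 0$. Interior elliptic regularity on compact subsets of $Y_\pm$ extracts subsequential $C^{1,\alpha}_\loc$--limits $\ua^\infty_\pm\in\cT_{A_\pm}$. The leading asymptotic terms on the two sides of the neck match via $\bar f$ by construction, so Proposition~\ref{prop:iota} gives $\iota_+(\ua^\infty_+)=\bar f^*\iota_-(\ua^\infty_-)$; hypothesis~\eqref{eq:trivial-intersection-2} forces this common element to vanish, and injectivity of $\iota_\pm$ then yields $\ua^\infty_\pm=0$. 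To rule out mass concentrating inside the neck, translate $\ua_i$ by a suitable $t_i\in[0,2T_i]$ and extract a bounded limit $\ub^\infty\in\ker L_\infty$; by Proposition~\ref{prop:translation-invariant-kernel} this limit lies in $\ker D_{A_{\infty,+}}$, and is killed by the same injectivity/transversality combination once matched with the boundary values of $\ua^\infty_\pm$. Combined with a weighted Schauder estimate on the neck, this contradicts $\|\ua_i\|_{C^{1,\alpha}}=1$.

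The main obstacle is this uniform invertibility. The delicate point is that $\delta=0$ is a critical weight for the ACyl operators $L_{A_\pm}$, precisely because $\ker D_{A_{\infty,\pm}}$ is generally nontrivial, so one cannot directly invoke weighted Fredholm theory in the style of~\cite{Lockhart1985} to produce uniform inverses. What saves the argument is exactly hypothesis~\eqref{eq:trivial-intersection-2}: the transverse intersection of $\im\iota_+$ and $\bar f^*\im\iota_-$ is the precise condition guaranteeing that no nontrivial asymptotic datum is simultaneously a valid boundary value from both sides---the functional-analytic avatar of the Lagrangian intersection condition in Atiyah--Floer-type gluing theorems anticipated in the discussion following Theorem~\ref{thm:itcs}.
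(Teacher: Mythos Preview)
Your overall architecture---preglue, estimate the error, invert the linearisation, contract---matches the paper's. The gap is in the linear step: the uniformly bounded right inverse you posit does not exist in general. Whenever $\ker D_{A_{\infty,+}}\neq 0$ one can manufacture an explicit almost-kernel sequence that defeats it. Pick $v_+\in\cT_{A_+}$ with $\iota_+(v_+)\neq 0$ (such $v_+$ exists by Proposition~\ref{prop:lagrangian} once $\ker D_{A_{\infty,+}}\neq 0$) and set $\ua_T:=\psi_T\,v_+$, where $\psi_T$ is a cutoff equal to $1$ on the compact part of $Y_+$ and decreasing to $0$ over the neck interval $[0,T-1]$, so that $|\psi_T'|=O(1/T)$. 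Since $L_{A_+}v_+=0$ and $L_T=L_{A_+}+O(e^{\delta T})$ on $Y_{T,+}$ for $t\leq T-1$, the Leibniz rule gives
\[
\|L_T\ua_T\|_{C^{0,\alpha}}=O(1/T)+O(e^{\delta T})\longrightarrow 0,
\]
while $\|\ua_T\|_{C^{1,\alpha}}\geq\|\ua_T\|_{L^\infty}\geq c>0$ because $v_+\to\iota_+(v_+)\neq 0$ along the end. Hence $\|L_T^{-1}\|\gtrsim T$. Your neck-concentration step does not exclude this: the translated limit $\ub^\infty=\iota_+(v_+)$ lies in $\im\iota_+$, not in the intersection $\im\iota_+\cap\bar f^*\im\iota_-$, and there is no mechanism in your sketch that forces it into that intersection---the compact-set limits $\ua^\infty_+\!=v_+$, $\ua^\infty_-\!=0$ are perfectly consistent with transversality.

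The paper sidesteps this by \emph{not} seeking a uniform inverse. It proves only
\[
\|L_T^{-1}\|_{C^{0,\alpha}\to C^{1,\alpha}}\leq c\,e^{\frac{|\delta|}{4}T},
\]
which is still good enough because the pregluing error is $O(e^{-|\delta|T})$ and the quadratic term picks up at worst $e^{\frac{|\delta|}{2}T}$ through $L_T^{-1}$, so the contraction constant is $O(e^{-\frac{|\delta|}{2}T})$. The payoff of this seemingly wasteful normalisation is Proposition~\ref{prop:uniform-convergence}: because the contradicting sequence satisfies $e^{\frac{|\delta|}{4}T_i}\|L_{T_i}\ua_i\|\to 0$ rather than merely $\|L_{T_i}\ua_i\|\to 0$, one can use the right inverse of $L_{A_\pm}$ in a small \emph{positive} weight (Proposition~\ref{prop:L-right-inverse}) to upgrade $C^{1,\alpha}_{\loc}$--convergence to uniform convergence on all of $Y_{T_i,\pm}$. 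That uniform convergence is exactly what makes $\iota_+(\ua^\infty_+)=\bar f^*\iota_-(\ua^\infty_-)$ automatic and renders any separate neck-concentration analysis unnecessary. If you want to salvage your route, you must either accept a growing inverse bound and check the bookkeeping still closes (it does, as above), or pass to weighted Hölder spaces on $Y_T$ in which a genuinely uniform estimate holds; the unweighted uniform bound, however, is false.
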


\begin{proof}
  The proof proceeds in three steps.
  We first produce an approximate $\Gtwo$--instanton $\tilde A_T$ by an explicit cut-and-paste procedure.
  This reduces the problem to solving the non-linear partial differential equation
  \begin{equation}
    \label{eq:perturbation}
    \rd_{\tilde A_t}^* a = 0 \qandq
    \rd_{\tilde A_T+a} \xi + *_T(F_{\tilde A_T + a}\wedge \psi_T) = 0.
  \end{equation}
  for $a\in \Omega^1(Y_T,\fg_{E_T})$ and $\xi\in\Omega^0(Y_T,\fg_{E_T})$ where $\psi_T:=*\phi_T$.
  Under the hypotheses of \autoref{thm:gluing} we will show that we can solve the linearisation of \eqref{eq:perturbation} in a uniform fashion.
  The existence of a solution of \eqref{eq:perturbation} then follows from a simple application of Banach's fixed-point theorem.

  \setcounter{step}{0}
  \begin{step}
    \label{step:1}
    There exists a $\delta < 0$ and for each $T\geq T_0$ there exists a connection $\tilde A_T$ on a $G$--bundle $E_T$ over $Y_T$ such that
    \begin{equation}
      \label{eq:error-estimate}
      \|F_{\tilde A_T}\wedge\psi_T\|_{C^{0,\alpha}}=O(e^{\delta T}).
    \end{equation}
  \end{step}

  The bundle $E_T$ is constructed by gluing $E_\pm|_{Y_{T,\pm}}$ via $\bar f$ and the connection $\tilde A_T$ is defined by
  \begin{equation*}
    \tilde A_T := A_{\pm} - \bar\pi_\pm^*[\chi(t-T+1) a_\pm]
  \end{equation*}
  over $Y_{T,\pm}$ where
  \begin{equation*}
    a_\pm := \bar\pi_{\pm,*}A_\pm - A_{\infty,\pm},
  \end{equation*}
  $\bar\pi_\pm$ is as in \autoref{def:G2instanton_asymptotic} and $\chi$ is as in \autoref{sec:gluing-acyl-g2-manifolds}.
  Then \eqref{eq:error-estimate} is a straight-forward consequence of \eqref{eq:eta-estimate} and \eqref{eq:connection-decay}.

  \begin{step}
    \label{step:2}
    Define a linear operator $L_T\co C^{1,\alpha}\to C^{0,\alpha}$ by \eqref{eq:L} with $A=\tilde A_T$ and $\phi=\phi_T$.
    Then there exist constants $\tilde T_1,c>0$ such that for all  $T\geq\tilde T_1$ the operator $L_T$ is invertible and
    \begin{equation}
      \label{eq:inverse-estimate}
      \|L_T^{-1}\ua\|_{C^{1,\alpha}}\leq ce^{\frac{|\delta|}{4}T} \|\ua\|_{C^{0,\alpha}}.
    \end{equation}
  \end{step}
   
  \setcounter{substep}{0}
  \begin{substep}
    \label{step:2-1}
    There exists a constant $c>0$ such that for all $T\geq T_0$
    \begin{equation}
      \label{eq:schauder-estimate}
      \|\ua\|_{C^{1,\alpha}}\leq c\(\|L_T\ua\|_{C^{0,\alpha}}+\|\ua\|_{L^\infty}\).
    \end{equation}
  \end{substep}

  This is an immediate consequence of standard interior Schauder estimates because of \eqref{eq:eta-estimate} and \eqref{eq:connection-decay}.

  \begin{substep}
    \label{step:2-2}
    There exist constants $\tilde T_1\geq T_0$ and $c>0$ such that for $T\in[\tilde T_1,\infty)$
    \begin{equation}
      \label{eq:L-infty-estimate}
      \|\ua\|_{L^\infty}\leq ce^{\frac{|\delta|}{4}T} \|L_T\ua\|_{C^{0,\alpha}}.
    \end{equation}
  \end{substep}

  Suppose not; then there exist a sequence $(T_i)$ tending to infinity and a sequence $(\ua_i)$ such that
  \begin{equation}
    \label{eq:Lua-conditions}
    \|\ua_i\|_{L^\infty}=1 \quad\text{and}\quad
    \lim_{i\to\infty} e^{\frac{|\delta|}{4}T_i}\|L_{T_i}\ua_i\|_{C^{0,\alpha}}=0.
  \end{equation}
  Then by \eqref{eq:schauder-estimate}
  \begin{equation}
    \label{eq:ua-conditions}
    \|\ua_i\|_{C^{1,\alpha}} \leq 2c.
  \end{equation}
  Hence, by Arzelà--Ascoli we can assume (passing to a subsequence) that the sequence $\ua_i|_{Y_{T_i,\pm}}$ converges in $C^{1,\alpha/2}_\loc$ to some section $\ua_{\infty,\pm}$ of $\(\Lambda^0\oplus\Lambda^1\)\otimes \fg_{E_\pm}$ over $Y_\pm$, which is bounded and satisfies
  \begin{equation*}
    L_{A_\pm} \ua_{\infty,\pm}=0
  \end{equation*}
  because of \eqref{eq:eta-estimate} and \eqref{eq:connection-decay}.
  Using standard elliptic estimates it follows that $\ua_{\infty,\pm}\in\cT_{A_\pm}$.

  \begin{prop}
    \label{prop:uniform-convergence}
    In the above situation
    \begin{equation*}
      \lim_{i\to\infty}\left\|(\ua_i|_{Y_{T_i,\pm}})-(\ua_{\infty,\pm}|_{Y_{T_i,\pm}})\right\|_{L^\infty(Y_{T_i,\pm})}=0.
    \end{equation*}
  \end{prop}

  The proof of this proposition will be given at the end of this section.
  Accepting it as a fact for now, it follows immediately that
  \begin{equation*}
    \iota_+(\ua_{\infty,+})=\bar f^*\circ\iota_-(\ua_{\infty,-})
  \end{equation*}
  because $Y_{T_i,+}\cap Y_{T_i,-} = [T_i,T_i+1]\times Z_+$.
  Now, by \eqref{eq:trivial-intersection-2} we must have $\iota_\pm(\ua_{\infty,\pm})=0$; hence, $\ua_{\infty,\pm}=0$, since $\iota_\pm$ are injective.

  However, by \eqref{eq:Lua-conditions} there exist $x_i\in Y_{T_i}$ such that $|\ua_{T_i}|(x_i)=1$.  By passing to a further subsequence and possibly changing the rôles of $+$ and $-$ we can assume that each $x_i \in Y_{T_i,+}$; hence, by \autoref{prop:uniform-convergence}, $\ua_{\infty,+}\neq 0$, contradicting what was derived above.
  This proves \eqref{eq:L-infty-estimate}.

  \begin{substep}
    We complete the proof of \autoref{step:2}.
  \end{substep}

  Combining \eqref{eq:schauder-estimate} and \eqref{eq:L-infty-estimate} yields
  \begin{equation*}
    \|\ua\|_{C^{1,\alpha}}\leq ce^{\frac{|\delta|}{4}T}\|L_T\ua\|_{C^{0,\alpha}}.
  \end{equation*}
  Therefore, $L_T$ is injective; hence, also surjective since $L_T$ is formally self-adjoint. 

  \begin{step}
    There exists a constant $T_1\geq \tilde T_1$ and for each $T\geq T_1$ a smooth solution $\ua=\ua_T$ of \eqref{eq:perturbation} such that  $\lim_{T\to\infty}\|\ua_T\|_{C^{1,\alpha}}=0$.
  \end{step}

  We can write \eqref{eq:perturbation} as
  \begin{equation}
    \label{eq:perturbation-2}
    L_T \ua + Q_T(\ua) + \epsilon_T=0
  \end{equation}
  where $Q_T(\ua):=\frac12*_T([a\wedge a]\wedge\psi_T)+[a,\xi]$ and $\epsilon_T:=*_T(F_{\tilde A_T}\wedge\psi_T)$.
  We make the ansatz  $\ua=L_T^{-1}\ub$.
  Then \eqref{eq:perturbation-2} becomes
  \begin{equation}
    \label{eq:perturbation-3}
    \ub + \tilde Q_T(\ub) + \epsilon_T=0
  \end{equation}
  where $\tilde Q_T=Q_T\circ L_T^{-1}$.
  By \eqref{eq:inverse-estimate}
  \begin{equation*}
    \|\tilde Q_T(\ub_1)-\tilde Q_T(\ub_2)\|_{C^{0,\alpha}}\leq
    c e^{\frac{|\delta|}{2}T} (\|\ub_1\|_{C^{0,\alpha}}+\|\ub_2\|_{C^{0,\alpha}})
    \|\ub_1-\ub_2\|_{C^{0,\alpha}}
  \end{equation*}
  for some constant $c>0$ independent of $T\geq \tilde T_1$.
  By \autoref{step:1}, $\|\epsilon_T\|_{C^{0,\alpha}} = O(e^{\delta T})$.
  Now, \autoref{lem:CM2} yields the desired solution of \eqref{eq:perturbation-3} and thus of \eqref{eq:perturbation} provided $T\geq T_1$ for a suitably large $T_1\geq \tilde T_1$.
  By elliptic regularity $\ua$ is smooth.
\end{proof}

\begin{lemma}[{\citet[Lemma~7.2.23]{Donaldson1990}}]
  \label{lem:CM2}
  Let $X$ be a Banach space and let $T \co X\to X$ be a smooth map with $T(0)=0$.
  Suppose there is a constant $c>0$ such that
  \begin{equation*}
    \|Tx-Ty\|\leq c\(\|x\|+\|y\|\)\|x-y\|.
  \end{equation*}
  If $y\in X$ satisfies $\|y\|\leq\frac{1}{10c}$, then there exists a unique $x\in X$ with $\|x\|\leq\frac{1}{5c}$ solving
  \begin{equation*}
    x+Tx=y.
  \end{equation*}
  Moreover, this $x\in X$ satisfies $\|x\|\leq 2\|y\|$.
\end{lemma}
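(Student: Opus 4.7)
The plan is to apply Banach's fixed-point theorem to the map $F\co X \to X$ defined by $F(x) := y - Tx$ on the closed ball $B := \{x \in X : \|x\| \leq \frac{1}{5c}\}$. A fixed point of $F$ is precisely a solution of $x + Tx = y$. The routine is standard; the only things to check are that $F$ maps $B$ to itself and is a contraction there, after which the quantitative bound $\|x\|\leq 2\|y\|$ follows by feeding the solution back into the equation.

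First I would verify the self-map property. For $x\in B$, applying the hypothesis to the pair $(x,0)$ and using $T(0)=0$ gives $\|Tx\|\leq c\|x\|^2 \leq \frac{1}{25c}$, so
\begin{equation*}
  \|F(x)\| \leq \|y\| + \|Tx\| \leq \frac{1}{10c} + \frac{1}{25c} = \frac{7}{50c} < \frac{1}{5c},
\end{equation*}
so $F(B)\subset B$. Next I would check the contraction property: for $x_1,x_2\in B$,
\begin{equation*}
  \|F(x_1) - F(x_2)\| = \|Tx_1 - Tx_2\| \leq c(\|x_1\| + \|x_2\|)\|x_1 - x_2\| \leq \tfrac{2}{5}\|x_1 - x_2\|,
\end{equation*}
so $F$ is a contraction on $B$ with constant $2/5$. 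Banach's fixed-point theorem then yields a unique fixed point $x\in B$ of $F$, which is exactly the desired unique solution of $x + Tx = y$ in $B$.

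Finally, for the improved bound, I would plug the solution back into $x = y - Tx$ to get
\begin{equation*}
  \|x\| \leq \|y\| + \|Tx\| \leq \|y\| + c\|x\|^2 \leq \|y\| + \tfrac{1}{5}\|x\|,
\end{equation*}
using $\|x\|\leq \frac{1}{5c}$, and hence $\|x\| \leq \frac{5}{4}\|y\| \leq 2\|y\|$. There is no real obstacle here: the hypothesis is precisely tuned so that the quadratic error term $\|Tx\|\leq c\|x\|^2$ is dominated by the linear bound on $\|x\|$ inside $B$, and the numerical constants $\frac{1}{10c}$ and $\frac{1}{5c}$ were chosen so that both the self-map and contraction estimates go through with plenty of room. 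The smoothness hypothesis on $T$ is not actually needed for the statement as phrased; only the Lipschitz-type estimate is used.
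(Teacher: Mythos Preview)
Your proof is correct and is the standard Banach fixed-point argument. The paper itself does not give a proof of this lemma; it simply cites Donaldson--Kronheimer~\cite{Donaldson1990}*{Lemma~7.2.23}, where the same contraction-mapping argument is carried out.
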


To complete the proof of \autoref{thm:gluing} it now remains to prove \autoref{prop:uniform-convergence} for which we require the following result.

\begin{prop}
  \label{prop:L-right-inverse}
  In the situation of \autoref{thm:gluing}, there is a $\gamma_0>0$ such that for each $\gamma\in(0,\gamma_0)$ the linear operator $L_{A_\pm}\co C^{1,\alpha}_\gamma\to C^{0,\alpha}_\gamma$ has a bounded right inverse.
\end{prop}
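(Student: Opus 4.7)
The plan is to establish that $L_{A_\pm,\gamma}$ is Fredholm and surjective for small $\gamma>0$, after which a bounded right inverse falls out of the open mapping theorem. The essential input is the injectivity of $\iota_\pm$ from the hypotheses of Theorem~\ref{thm:gluing}, which I will use to kill the cokernel.

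First, by Proposition~\ref{prop:L-dt-D}, on the cylindrical end the operator $L_{A_\pm}$ is asymptotic to $\tilde I(\partial_t - \tilde I D_{A_{\infty,\pm}})$, where $\tilde I D_{A_{\infty,\pm}}$ is formally self-adjoint (exactly as exploited in the proof of Proposition~\ref{prop:iota}) and $\tilde I$ is invertible. The spectrum of $\tilde I D_{A_{\infty,\pm}}$ is therefore discrete and real, so I can choose $\gamma_0>0$ strictly smaller than $\abs{\lambda}$ for every nonzero eigenvalue $\lambda$ of $\tilde I D_{A_{\infty,+}}$ and $\tilde I D_{A_{\infty,-}}$. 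For any $\gamma\in(0,\gamma_0)$ both $\pm\gamma$ are then non-critical weights, so Proposition~\ref{prop:non-critical-fredholm} shows that $L_{A_\pm,\gamma}\co C^{1,\alpha}_\gamma\to C^{0,\alpha}_\gamma$ is Fredholm.

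Next, I would identify the cokernel by duality. One checks directly from \eqref{eq:L} that $L_{A_\pm}$ is formally self-adjoint with respect to the unweighted $L^2$ pairing, so the standard Lockhart--McOwen duality identifies the cokernel of $L_{A_\pm,\gamma}$ with $\ker L_{A_\pm,-\gamma}$, whose elements are smooth by elliptic regularity. Since the open interval $(-\gamma_0,0)$ contains no critical weights, the kernel of $L_{A_\pm}$ on the weighted spaces is locally constant in the weight, and so
\begin{equation*}
\ker L_{A_\pm,-\gamma} = \ker L_{A_\pm,-\gamma_0} = \ker \iota_\pm
\end{equation*}
for every $\gamma\in(0,\gamma_0)$ by Proposition~\ref{prop:iota}. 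By the hypothesis of Theorem~\ref{thm:gluing} the map $\iota_\pm$ is injective, so this space vanishes and $L_{A_\pm,\gamma}$ is surjective.

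Finally, since $L_{A_\pm,\gamma}$ is surjective and Fredholm, its kernel $K_\pm\subset C^{1,\alpha}_\gamma$ is finite-dimensional and therefore admits a closed complement $W_\pm$; the restriction $L_{A_\pm,\gamma}|_{W_\pm}\co W_\pm\to C^{0,\alpha}_\gamma$ is a continuous bijection of Banach spaces, whose inverse, bounded by the open mapping theorem, extends by zero on $K_\pm$ to the desired bounded right inverse. I expect the main subtlety to be the duality step in paragraph three, which requires porting the usual $L^2$-based Lockhart--McOwen duality to the Hölder setting; but this is already implicit in the proof of Proposition~\ref{prop:abstract-iota}, so it should present no serious difficulty.
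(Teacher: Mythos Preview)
Your proof is correct and follows essentially the same approach as the paper's, which is very terse: it invokes Proposition~\ref{prop:non-critical-fredholm} for Fredholmness, identifies the cokernel with $\cT_{A_\pm,-\gamma}$ by formal self-adjointness, and notes this vanishes by the injectivity hypothesis on $\iota_\pm$. You have simply unpacked each of these three steps (and added the routine construction of the right inverse from surjectivity), so there is nothing substantively different.
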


\begin{proof}
  By \autoref{prop:non-critical-fredholm}, $L_{A_\pm}\co C^{1,\alpha}_\gamma\to C^{0,\alpha}_\gamma$ is Fredholm whenever $\gamma>0$ is sufficiently small.
  The cokernel of $L_{A_\pm}$ can be identified to be $\cT_{A_\pm,-\gamma}$, which is trivial by hypothesis.
\end{proof}

\begin{proof}[Proof of \autoref{prop:uniform-convergence}]
  We restrict to the $+$ case; the $-$ case is identical.
  It follows from the construction of $\ua_{\infty,+}$ that for each fixed compact subset $K\subset Y_+$
  \begin{equation*}
    \lim_{i\to\infty} \left\|(\ua_i|_K)-(\ua_{\infty,+}|_K)\right\|_{L^\infty(K)}=0.
  \end{equation*}
  To strengthen this to an estimate on all of $Y_{T_i,+}$ the factor $e^{\frac{|\delta|}{4}T}$ in \eqref{eq:Lua-conditions} will be important, even though it is clearly not optimal.

  With $\chi$ as in \autoref{f:chi} define a cut-off function $\chi_T\co Y_+ \to [0,1]$ by $ \chi_T(x):=1-\chi(t_+(x)-\frac32 T)$.
  For each sufficiently small $\gamma>0$ we have
  \begin{equation*}
    \|L_{A_+} (\chi_{T_i} \ua_i)\|_{C^{0,\alpha}_\gamma(Y_+)}=O(e^{-\frac32\gamma T_i})
  \end{equation*}
  using the estimates \eqref{eq:eta-estimate}, \eqref{eq:connection-decay}, \eqref{eq:Lua-conditions} and \eqref{eq:ua-conditions}.
  Using \autoref{prop:L-right-inverse} we construct $\ub_i\in C^{1,\alpha}_\gamma$ such that $\ua_{\infty,+}^i := \chi_{T_i} \ua_i+\ub_i \in\cT_{A_+,\gamma}$ and $\|\ub_i\|_{C^{1,\alpha}_{0,\gamma}}=O(e^{-\frac32\gamma T_i})$.
  Hence,
  \begin{equation*}
    \left\|(\ua_i|_{Y_{T_i,+}})-(\ua_{\infty,+}^i|_{Y_{T_i,+}})\right\|_{L^\infty(Y_{T_i,+})} = O(e^{-\frac12\gamma T_i}).
  \end{equation*}
  Moreover, $\lim_{i\to\infty}\left\|(\ua_{\infty,+}^i|_K)-(\ua_{\infty,+}|_K)\right\|_{L^\infty(K)} = 0$ and since both $\|\cdot\|_{L^\infty(K)}$ and $\|\cdot\|_{L^\infty(Y_+)}$ are norms on the finite dimensional vector space $\cT_{A_+,\gamma}=\cT_{A_+}$ it also follows that
  \begin{equation*}
    \lim_{i\to\infty} \|\ua_{\infty,+}^i - \ua_{\infty,+}\|_{L^\infty(Y_+)} = 0.
  \end{equation*}
  Therefore,
  \begin{equation*}
    \lim_{i\to\infty}\left\|(\ua_i|_{Y_{T_i,+}})-(\ua_{\infty,+}|_{Y_{T_i,+}})\right\|_{L^\infty(Y_{T_i,+})} = 0.
    \qedhere
  \end{equation*}
\end{proof}

\begin{remark}
  \label{rmk:transversality}
  The proof of \autoref{thm:gluing} slightly simplifies assuming $\cH^0_{A_{\infty,+}}\oplus \cH^1_{A_{\infty,+}}=\{0\}$ instead of \eqref{eq:trivial-intersection-2}:
  We can directly conclude that $\iota_\pm(\ua_{\infty,\pm})=0$ and, hence, $\ua_{\infty,\pm}=0$; thus making \autoref{prop:uniform-convergence} unnecessary.
  In particular, \eqref{eq:L-infty-estimate} holds without the additional factor of $e^{\frac{|\delta|}{4}T}$.
\end{remark}

%%% Local Variables: 
%%% mode: latex
%%% TeX-master: "G2InstantonsTCSGluingTheorem"
%%% End:

\section[From holomorphic bundles over building blocks to \texorpdfstring{$\Gtwo$}{G2}--instantons ...]{From holomorphic vector bundles over building blocks to \texorpdfstring{$\Gtwo$}{G2}--instantons over ACyl \texorpdfstring{$\Gtwo$}{G2}--manifolds}
\label{sec:holomorphic}

We now discuss how to deduce \autoref{thm:itcs} from \autoref{thm:gluing}.

\begin{definition}
  Let $(V,\omega,\Omega)$ be an ACyl Calabi--Yau $3$--fold with asymptotic cross-section $(\Sigma,\omega_I,\omega_J,\omega_K)$.
  Let $A_\infty$ be an ASD instanton on a $G$--bundle $E_\infty$ over $\Sigma$.
  A HYM connection $A$ on a $G$--bundle $E$ over $V$ is called \defined{asymptotic} to $A_\infty$ if there exist a constant $\delta<0$ and a bundle isomorphism $\bar\pi\co E|_{V\setminus K} \to E_\infty$ covering $\pi\co V\setminus K \to \R_+\times S^1\times \Sigma$ such that
  \begin{equation*}
    \nabla^k (\bar\pi_*A-A_\infty) = O(e^{\delta t})
  \end{equation*}
  for all $k\in\N_0$.
  Here by a slight abuse of notation we also denote by $E_\infty$ and $A_\infty$ their respective pullbacks to $\R_+\times S^1\times\Sigma$.
\end{definition}

The following theorem can be used to produce examples of HYM connections $A$ on $\PU(n)$--bundles over ACyl Calabi--Yau $3$--folds asymptotic to ASD instantons $A_\infty$; hence, by taking the product with $S^1$, examples of $\Gtwo$--instantons $\pi_V^*A$ asymptotic to $\pi_\Sigma^*A_\infty$ over the ACyl $\Gtwo$--manifold $S^1\times V$.
Here $\pi_V\co S^1\times V \to V$ and $\pi_\Sigma \co T^2\times \Sigma\to \Sigma$ denote the canonical projections.

\begin{theorem}[{\citet[Theorem~59]{SaEarp2011}}]
  \label{thm:saearp}
  Let $Z$ and $\Sigma$ be as in \autoref{thm:hhn} and let $(V:=Z\setminus \Sigma,\omega,\Omega)$ be the resulting ACyl Calabi--Yau $3$--fold.
  Let $\sE$ be a holomorphic vector bundle over $Z$ and let $A_\infty$ be an ASD instanton on $\sE|_\Sigma$ compatible with the holomorphic structure.
  Then there exists a HYM connection $A$ on $\sE|_V$ which is compatible with the holomorphic structure on $\sE|_V$ and asymptotic to $A_\infty$.
\end{theorem}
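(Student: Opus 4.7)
The plan is to construct the Hermitian--Yang--Mills connection in two stages: first, build an approximate solution $A_0$ on $\cE|_V$ by an explicit patching of $A_\infty$ across the cylindrical end; second, deform $A_0$ to an exact HYM connection by a small, exponentially decaying complex gauge transformation.

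For the reference connection, note that the trivial normal bundle of $\Sigma\subset Z$ (equivalently the fibration $f\co Z\to \P^1$) supplies a holomorphic tubular neighbourhood identifying $\cE$ in a neighbourhood of $\Sigma$ with the pullback of $\cE|_\Sigma$ to a punctured disc bundle. Under the ACyl change of coordinates on $V=Z\setminus\Sigma$, this realises $\cE|_V$ on the end $\R_+\times S^1\times\Sigma$ as an exponentially small holomorphic deformation of the pullback of $\cE|_\Sigma$. Let $h_\infty$ denote the Hermite--Einstein metric on $\cE|_\Sigma$ whose Chern connection is $A_\infty$; pull it back to the end and patch by a cutoff to any smooth Hermitian metric over the compact part, obtaining $h_0$. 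The Chern connection $A_0$ of $h_0$ is then compatible with the holomorphic structure, asymptotic to $A_\infty$, and $\|\Lambda F_{A_0}\|_{C^{k,\alpha}} = O(e^{\delta t})$ for some $\delta<0$.

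For the deformation, write $h = h_0\,e^{s}$ with $s$ a trace-free Hermitian section of $\cEnd(\cE|_V)$ and expand
\begin{equation*}
  \Lambda F_h = \Lambda F_{h_0} + i\Lambda\delbar\del_{A_0} s + Q(s),
\end{equation*}
where $Q(s)$ collects the terms of order $\geq 2$ in $s$ and its derivatives. By the Kähler identities the linear part is, up to a positive constant, the connection Laplacian $\Delta_{A_0}$ acting on $\cEnd_0(\cE|_V)$, which along the end is asymptotic to $-\del_t^2-\del_\alpha^2+\Delta_{A_\infty}$. Since $\cE|_\Sigma$ is stable it is simple, so $\Delta_{A_\infty}$ has trivial kernel on trace-free endomorphisms; consequently the associated translation-invariant first-order system has a spectral gap at $0$. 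The Lockhart--McOwen / Maz'ya--Plamenevski{\u\i} Fredholm theory recalled in Section~\ref{sec:gluing} then makes $\Delta_{A_0}\co C^{2,\alpha}_\delta \to C^{0,\alpha}_\delta$ an isomorphism for all sufficiently small $|\delta|$. Rewriting the equation as the fixed-point problem $s = -\Delta_{A_0}^{-1}(\Lambda F_{h_0} + Q(s))$ and invoking Banach's contraction principle (in the spirit of Lemma~\ref{lem:CM2}) produces the required $s$, since the inhomogeneous term is exponentially small in the weighted norm.

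The main obstacle is twofold: establishing the spectral-gap input for the model operator on the cross-section, which is precisely where the stability hypothesis on $\cE|_\Sigma$ enters; and controlling the quasilinear nonlinearity $Q(s)$ (which involves derivatives of $s$) within the weighted contraction estimate. Once both are in place, elliptic regularity and the exponential decay of $s$ upgrade the resulting connection to a smooth HYM connection $A$ on $\cE|_V$ satisfying $\nabla^k(\bar\pi_*A - A_\infty) = O(e^{\delta t})$ for all $k\in\N_0$, as required.
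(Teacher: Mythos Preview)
The paper does not prove this theorem; it is quoted from \cite{SaEarp2011} as a black-box input, so there is no in-paper argument to compare against. Nonetheless, your sketch has a genuine gap that is worth naming.

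Your contraction step requires the inhomogeneous term $\Lambda F_{h_0}$ to be \emph{small} in $C^{0,\alpha}_\delta$, not merely to lie in that space. You have arranged $\Lambda F_{h_0}=O(e^{\delta t})$ along the end, but over the compact piece $K\subset V$ the metric $h_0$ is an arbitrary smooth Hermitian metric and $\Lambda F_{h_0}$ is of order one there; hence $\|\Lambda F_{h_0}\|_{C^{0,\alpha}_\delta}$ is bounded but carries no small parameter, and Lemma~\ref{lem:CM2} does not apply. This is not a technicality that can be patched by choosing $h_0$ more cleverly: producing a HYM metric on $\cE|_V$ is a genuinely non-perturbative existence statement, exactly as the Donaldson--Uhlenbeck--Yau theorem is in the compact case. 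The argument in \cite{SaEarp2011} instead runs Donaldson's evolution equation $h^{-1}\dot h=-2i\Lambda F_h$ on the ACyl manifold, obtaining long-time existence and $C^0$--control via the maximum principle (this is where the exponential decay of the initial error and the simplicity of $\cE|_\Sigma$ enter), and then proves convergence to a HYM limit with the stated asymptotics. Your weighted linear analysis is correct and relevant, but it feeds into controlling the asymptotics of the flow and its limit rather than replacing the flow by an implicit-function-theorem step.
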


By slight abuse of notation we also denote by $A_\infty$ the ASD instanton on the $\PU(n)$--bundle associated with $\sE|_\Sigma$ and by $A$ the HYM connection on the $\PU(n)$--bundle associated with $\sE|_V$.
\autoref{thm:gluing} and \autoref{thm:saearp} together with the following result immediately imply \autoref{thm:itcs}.

\begin{prop}
  \label{prop:diagram}
  In the situation of \autoref{thm:saearp}, suppose $H^0(\Sigma,\sEnd_0(\sE|_\Sigma))=0$.
  Then
  \begin{equation}
    \label{eq:H1-holomorphic-ASD}
    \cH^1_{\pi_\Sigma^*A_\infty} = H^1_{A_\infty},
  \end{equation}
  see \autoref{def:cH} and \autoref{rmk:H1-ASD}, and for some small $\delta>0$ there exist injective linear maps 
  \begin{align*}
    & \kappa_- \co \cT_{\pi_V^*A,-\delta}\to H^1(Z,\sEnd_0(\sE)(-\Sigma)) \\
    \andq& \kappa \co \cT_{\pi_V^*A}\to H^1(Z,\sEnd_0(\sE)) 
  \end{align*}   
  such that the following diagram commutes:
  \begin{equation}
  \label{eq:diagram}
    \begin{tikzpicture}[baseline=(current bounding box.center)]
      \matrix (m) [matrix of math nodes, row sep=3.2em, column
      sep=1.6em] {
        \cT_{\pi_V^*A,-\delta} & \cT_{\pi_V^*A} & \cH^1_{\pi_\Sigma^*A_\infty} \\
        H^1(Z,\sEnd_0(\sE)(-\Sigma)) &H^1(Z,\sEnd_0(\sE)) &
        H^1(\Sigma,\sEnd_0(\sE|_\Sigma)). \\}; \path[-stealth]
      (m-1-1) edge 
      (m-1-2) edge node [right] {$\kappa_{-}$} (m-2-1)
      (m-1-2) edge node [above] {$\iota$} (m-1-3)
              edge node [right] {$\kappa$} (m-2-2)
      (m-1-3) edge node [right] {$\iso$} (m-2-3)
      (m-2-1) edge (m-2-2)
      (m-2-2) edge (m-2-3);
    \end{tikzpicture}
  \end{equation}
\end{prop}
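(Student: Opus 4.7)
The plan is to split the proposition into three tasks: (a) the identification~\eqref{eq:H1-holomorphic-ASD}; (b) the construction of the maps $\kappa$ and $\kappa_-$; and (c) the commutativity of~\eqref{eq:diagram}.

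For (a), I would exploit that $\pi_\Sigma^* A_\infty$ is translation-invariant along $T^2$ and Fourier-decompose any bounded element of $\ker D_{\pi_\Sigma^* A_\infty}$ in the $T^2$-directions. Each non-zero Fourier mode picks up a positive eigenvalue of the flat $T^2$-Laplacian appearing as a diagonal block of $D_{\pi_\Sigma^* A_\infty}^2$, so it cannot contribute a bounded element to the kernel; only the zero mode survives, and it is pulled back from $\Sigma$. Combined with Proposition~\ref{prop:kerDA-Hi}, this identifies the bounded part of $\ker D_{\pi_\Sigma^* A_\infty}$ with $\cH^0_{A_\infty} \oplus \cH^1_{A_\infty}$; the hypothesis $H^0(\Sigma, \cEnd_0(\cE|_\Sigma))=0$ kills $\cH^0_{A_\infty}$, and Remark~\ref{rmk:H1-ASD} together with standard Hodge theory on the Kähler surface $\Sigma$ identifies $\cH^1_{A_\infty} \iso H^1_{A_\infty}$.

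For (b), given $\ua=(\xi,a)\in\cT_{\pi_V^*A}$, the product structure of $Y=S^1\times V$ lets me $S^1$-average to reduce to an $S^1$-invariant bounded deformation of $A$ on $V$ (non-zero $S^1$-modes decay and are invisible to $\iota$). The calculation in the proof of Proposition~\ref{prop:kerDA-Hi}, together with the Coulomb condition $\rd_A^* a=0$, then shows that $\alpha:=a^{0,1}$ is $\delbar_A$-closed on $V$ and therefore represents a class in $H^1_{\delbar_A}(V,\cEnd_0(\cE|_V))$. To promote this to an algebraic class on $Z$ I would invoke Theorem~\ref{thm:saearp}, which guarantees that $A$ is compatible with the holomorphic structure inherited from $\cE$, together with the building-block structure of $(Z,\Sigma)$ (in particular $\Sigma\sim -K_Z$ has trivial normal bundle, so $V$ has a genuine cylindrical end). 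A Dolbeault-to-\v Cech comparison matched with the asymptotic decay of $a$ then places bounded representatives in $H^1(Z,\cEnd_0(\cE))$, defining $\kappa$, while those decaying like $O(e^{-\delta t})$ vanish along $\Sigma$ in the limit and land in the subspace $H^1(Z,\cEnd_0(\cE)(-\Sigma))$, defining $\kappa_-$. Injectivity follows because the gauge-fixing $\rd_A^* a=0$ together with $\cH^0_{A_\infty}=0$ allow one to recover $\ua$ uniquely from the Dolbeault class of $\alpha$ (the $(1,0)$-part is determined by the HYM condition and $\xi$ is determined by the vanishing of infinitesimal automorphisms).

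For (c), the bottom row is the cohomology sequence of $0\to\cEnd_0(\cE)(-\Sigma)\to\cEnd_0(\cE)\to\cEnd_0(\cE|_\Sigma)\to 0$, the right vertical is Remark~\ref{rmk:H1-ASD}, and the left square commutes tautologically from the construction of $\kappa,\kappa_-$. The nontrivial square is the right one, whose commutativity encapsulates the slogan \emph{``asymptotic limit equals restriction to $\Sigma$''}: by Proposition~\ref{prop:iota}, $\iota(\ua)$ is the harmonic leading term of $\bar\pi_*\ua$ at infinity, and by construction this agrees, up to gauge, with the Dolbeault restriction of $\alpha$ to $\Sigma$. \textbf{The main obstacle} is task~(b): turning a bounded analytic Dolbeault class on the open manifold $V$ into an algebraic class on its smooth projective completion $Z$, while matching exponential decay with vanishing along $\Sigma$. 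This is where the building-block structure and the fine asymptotic analysis underlying Theorem~\ref{thm:saearp} do the heavy lifting.
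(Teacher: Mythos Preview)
Your overall architecture matches the paper's, and your reduction of $\cT_{\pi_V^*A}$ to $S^1$--invariant data (the paper's Propositions~\ref{prop:T-kerD} and~\ref{prop:T-H}) is on target. Two points, however, need correction.

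First, you misattribute the heavy lifting in~(b). Theorem~\ref{thm:saearp} only supplies the HYM connection $A$; it plays no role in the cohomological comparison. What actually does the work is a purely sheaf-theoretic statement (Proposition~\ref{prop:resolution}): for $\delta\in\R\setminus\Z$, the complex of sheaves $\cA^\bullet_\delta$ on $Z$ whose sections over $U$ are $\cEnd_0(\cE)$--valued $(0,i)$--forms on $U\cap V$ satisfying $\nabla^k\bar\pi_*\alpha=O(e^{\delta t})$ is an \emph{acyclic resolution of} $\cEnd_0(\cE)(\lfloor\delta\rfloor\Sigma)$. Acyclicity is immediate (these are $C^\infty$--modules); exactness is a $\delbar$--Poincar\'e lemma with growth control, which reduces to invertibility of $\delbar=\del_t+i\del_\alpha$ on $C^\infty_\delta(\R\times S^1)$ for non-integer $\delta$ via Theorem~\ref{thm:mazya-plamenevskii}. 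This yields $\kappa^i_\delta\co\cH^i_{A,\delta}\to H^i(Z,\cEnd_0(\cE)(\lfloor\delta\rfloor\Sigma))$ directly, and taking $\delta$ just below resp.\ just above $0$ gives $\kappa_-$ and $\kappa$. Your ``Dolbeault-to-\v{C}ech comparison'' is exactly this, but the mechanism is the weighted $\delbar$--Poincar\'e lemma, not the asymptotic analysis of HYM connections.

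Second, your injectivity argument has a real gap. You argue that $\ua$ is determined by $\alpha=a^{0,1}$; that only shows the map $\ua\mapsto\alpha$ is injective, not $\ua\mapsto[\alpha]$. You still owe the implication ``$\alpha$ is $\delbar_A$--exact $\Rightarrow$ $\alpha=0$''. The paper handles $\kappa_-$ by integration by parts: if $\kappa_-(a)=0$ then $a=\delbar_A s$ for some $s\in\cA^0_{-\delta}(Z)$, and $\int_V|a|^2=\int_V\langle a,\delbar_A s\rangle=\int_V\langle\delbar_A^*a,s\rangle=0$, the boundary term vanishing by exponential decay. For $\kappa$ this fails (no decay), and instead one chases the diagram: if $\kappa(a)=0$ then its image in $H^1(\Sigma,\cEnd_0(\cE|_\Sigma))$ vanishes, so $\iota(a)=0$, so $a\in\cT_{\pi_V^*A,-\delta}$ by Proposition~\ref{prop:iota}; then injectivity of $\kappa_-$ together with injectivity of the bottom-left map (which is where $H^0(\Sigma,\cEnd_0(\cE|_\Sigma))=0$ is used) force $a=0$.
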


Equation \eqref{eq:H1-holomorphic-ASD} is a direct consequence of $\cH^0_{A_\infty}=0$.
The proof of the remaining assertions requires some preparation.

\subsection{Comparing infinitesimal deformations of \texorpdfstring{$\pi_V^*A$}{pi*A} and \texorpdfstring{$A$}{A}}

\begin{prop}
  \label{prop:T-kerD}
  If $A$ is a HYM connection asymptotic to $A_\infty$, then there exists a $\delta_0>0$ such that for all $\delta\leq \delta_0$
  \begin{equation}
    \label{eq:T-kerD}
    \cT_{\pi_V^*A,\delta} =\left\{ \ua \in \ker D_A : \nabla^k \bar\pi_*
      \ua=O(e^{\delta t}) ~\text{for all}~k\in\N_0 \right\}
  \end{equation}
  with $D_A$ as in \eqref{eq:D}.
\end{prop}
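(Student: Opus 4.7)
The plan is to reduce everything to a Fourier analysis in the $S^1_\beta$--direction of $Y=S^1\times V$, exploiting the fact that $\pi_V^*A$ is by construction invariant in~$\beta$. Proposition~\ref{prop:L-dt-D} applies verbatim to $Y=S^1_\beta\times V$ in place of $\R\times Z$ (its derivation is purely local along the first factor), yielding
\begin{equation*}
  L_{\pi_V^*A} = \tilde I\,\partial_\beta + D_A
\end{equation*}
under the natural bundle identification $(\Omega^0\oplus\Omega^1)(S^1\times V,\fg_E)\iso \pi_V^*(\Omega^0\oplus\Omega^0\oplus\Omega^1)(V,\fg_E)$ given by $(\xi,\,a_V+f\,\rd\beta)\leftrightarrow(\xi,f,a_V)$. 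The inclusion ``$\supset$'' of~\eqref{eq:T-kerD} is immediate: any $\ua'\in\ker D_A$ on~$V$ with $O(e^{\delta t})$ growth pulls back to a $\beta$--independent element of~$\cT_{\pi_V^*A,\delta}$.

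For the reverse inclusion, Fourier expand $\ua=\sum_{n\in\Z}\ua_n\,e^{in\beta}$ with each $\ua_n$ a section over~$V$. Each mode satisfies $(D_A+in\tilde I)\ua_n=0$ and inherits the growth condition $\nabla^k\bar\pi_*\ua_n=O(e^{\delta t})$ for all $k\in\N_0$. Since the $n=0$ mode is already an element of the right-hand side of~\eqref{eq:T-kerD}, the task reduces to showing $\ua_n=0$ for every $n\neq 0$.

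The crucial algebraic input is the anticommutation
\begin{equation*}
  \{\tilde I,D_A\}=0,
\end{equation*}
which can be verified by a direct, if careful, unpacking of~\eqref{eq:D} using the K\"ahler identity $[\Lambda,\del_A]=i\delbar_A^*$ and the formula $I=\tfrac12 *(\omega\wedge\omega\wedge\,\cdot\,)$ on $1$--forms; conceptually it is forced by the requirement that $L_{\pi_V^*A}^2=-\partial_\beta^2+\{\tilde I,D_A\}\partial_\beta+D_A^2$ be of Laplace type on the cylinder. Together with $\tilde I^2=-\id$, this yields $(D_A+in\tilde I)^2=D_A^2+n^2$, so that $D_A^2\ua_n=-n^2\ua_n$ for $n\neq 0$.

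To conclude $\ua_n=0$, first take $\delta\leq 0$. Integration by parts on $V_T:=V\cap\{t\leq T\}$ gives
\begin{equation*}
  \int_{V_T}\!\bigl(|D_A\ua_n|^2+n^2|\ua_n|^2\bigr)\,\dvol \;=\; B_T,
\end{equation*}
where $B_T$ is a boundary integral over $\{t=T\}\iso S^1_\alpha\times\Sigma$ that is bounded uniformly in~$T$, since elliptic regularity for the first-order equation $(D_A+in\tilde I)\ua_n=0$ controls $\|\ua_n\|_{C^1}$ pointwise by its $L^\infty$--norm on unit neighborhoods, and $\vol(\{t=T\})$ is constant. Letting $T\to\infty$ places $\ua_n,D_A\ua_n\in L^2(V)$; global integration by parts (legitimate in $H^1$) then forces $\|D_A\ua_n\|_{L^2}^2+n^2\|\ua_n\|_{L^2}^2=0$, hence $\ua_n=0$. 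For $0<\delta\leq\delta_0$, Proposition~\ref{prop:abstract-iota} applied separately to $L_{\pi_V^*A}$ on $S^1\times V$ and to $D_A$ on~$V$ shows that both sides of~\eqref{eq:T-kerD} are independent of~$\delta$ in $[0,\delta_0]$, reducing the claim to the already-treated case $\delta=0$. The only substantive obstacle in this plan is the direct verification of $\{\tilde I,D_A\}=0$; everything else is a mechanical application of Fourier analysis and standard ACyl elliptic theory.
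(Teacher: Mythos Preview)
Your proof is correct and follows essentially the same route as the paper's: both start from $L_{\pi_V^*A}=\tilde I\,\partial_\beta+D_A$, note the trivial inclusion, and handle $\delta\in(0,\delta_0]$ via Proposition~\ref{prop:iota}/\ref{prop:abstract-iota}. Where the paper dispatches the case $\delta\le0$ by citing an external lemma, you supply an explicit Fourier argument that unwinds precisely what that lemma says.

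Two minor remarks. First, your ``substantive obstacle'' is not really an obstacle: the anticommutation $\{\tilde I,D_A\}=0$ drops out immediately from two facts already used in the paper, namely that $D_A$ is self-adjoint (since $L_A$ and $\tilde I\partial_\beta$ are) and that $\tilde I D_A$ is self-adjoint (stated in the proof of Proposition~\ref{prop:iota}). Combining these with $\tilde I^*=-\tilde I$ gives $\tilde I D_A=(\tilde I D_A)^*=-D_A\tilde I$. No K\"ahler-identity computation is needed.

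Second, for $\delta\in(0,\delta_0]$ the paper's argument is slightly more economical than yours: one does not need to invoke Proposition~\ref{prop:abstract-iota} for $D_A$ on $V$ (which, while true, requires checking that the asymptotic model of $D_A$ on $\R_+\times S^1\times\Sigma$ fits the hypotheses). Instead, since the right-hand side of~\eqref{eq:T-kerD} is monotone in $\delta$ and always contained in the left-hand side, and the left-hand side is constant on $[0,\delta_0]$ by Proposition~\ref{prop:iota}, one gets ${\rm RHS}_\delta\subset{\rm LHS}_\delta={\rm LHS}_0={\rm RHS}_0\subset{\rm RHS}_\delta$ directly.
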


\begin{proof}
  We can write $L_A = \tilde I \del_\beta + D_A$ where $\beta$ denotes the coordinate on $S^1$.
  For $\delta\leq 0$, \eqref{eq:T-kerD} follows by an application of Lemma A.1 in \cite{Walpuski2011}.
  The right-hand side is contained in the left-hand side of \eqref{eq:T-kerD} which, by \autoref{prop:iota}, is independent of $\delta\in[0,\delta_0]$.
\end{proof}

\begin{prop}
  \label{prop:T-H}
  In the situation of \autoref{prop:diagram}, there exists a constant $\delta_0>0$ such that, for all $\delta\leq\delta_0$, $\cH^0_{A,\delta}=0$ and
  \begin{equation*}
    \cT_{\pi_V^*A,\delta}\iso \cH^1_{A,\delta}
  \end{equation*}
  where
  \begin{equation*}
    \cH^i_{A,\delta} :=\left\{ \alpha\in\cH^i_A : \nabla^k \bar\pi_*\alpha=O(e^{\delta t})~\text{for all}~k\in\N_0 \right\}.
  \end{equation*}
\end{prop}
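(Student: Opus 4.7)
The plan is to combine Proposition~\ref{prop:T-kerD} with the decomposition $\ker D_A \iso \cH^0_A \oplus \cH^1_A$ established in Proposition~\ref{prop:kerDA-Hi} and with a vanishing argument for $\cH^0_{A,\delta}$. I would first choose $\delta_0 > 0$ sufficiently small that the conclusion of Proposition~\ref{prop:T-kerD} holds and that $(-\delta_0, \delta_0)$ contains no non-zero critical weight of the asymptotic operators of either $D_A$ or $\delbar_A \oplus \delbar_A^*$ on the cross-section $S^1 \times \Sigma$. Given $(\xi, \eta, a) \in \ker D_A$ with $\nabla^k \bar\pi_*(\xi,\eta,a) = O(e^{\delta t})$ and $\delta \leq \delta_0$, I would then repeat the integration-by-parts argument from the proof of Proposition~\ref{prop:kerDA-Hi}: apply $\rd_A^*$ and $\rd_A^* \circ I$ to the third row of $D_A(\xi,\eta,a) = 0$, pair in $L^2$ against $\xi$ and $\eta$ respectively, and integrate over $V$. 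The boundary contributions at infinity vanish because for $\delta < 0$ the integration by parts is legitimate outright, while for $0 \leq \delta \leq \delta_0$ an analogue of Proposition~\ref{prop:iota} for the operator $\delbar_A \oplus \delbar_A^*$, combined with the hypothesis $\cH^0_{A_\infty} = H^0(\Sigma, \cEnd_0(\cE|_\Sigma)) = 0$, forces $\xi$ and $\eta$ to decay exponentially. One concludes $\rd_A\xi = \rd_A\eta = 0$, so $s := \xi + i\eta \in \cH^0_{A,\delta}$, and the remaining equations give $\alpha := a^{0,1} \in \cH^1_{A,\delta}$.

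To finish, I would show $\cH^0_{A,\delta} = 0$. Any $s \in \cH^0_{A,\delta}$ is a holomorphic section of $\cEnd_0(\cE)|_V$; by the argument of the previous paragraph it decays exponentially, and is certainly bounded near $\Sigma$, so Riemann's extension theorem applied in local trivialisations extends $s$ holomorphically across the smooth divisor $\Sigma$ to a section of $\cEnd_0(\cE)$ on $Z$, the decay forcing the extension to vanish on $\Sigma$. The long exact sequence associated to
\begin{equation*}
  0 \to \cEnd_0(\cE)(-\Sigma) \to \cEnd_0(\cE) \to \cEnd_0(\cE|_\Sigma) \to 0,
\end{equation*}
together with $H^0(\Sigma, \cEnd_0(\cE|_\Sigma)) = 0$ and the identification $\cO_Z(-\Sigma)|_\Sigma \iso \cO_\Sigma$ arising from the trivial normal bundle, then shows by induction on the order of vanishing along $\Sigma$ that $s \in H^0(Z, \cEnd_0(\cE)(-k\Sigma))$ for every $k \geq 1$; hence $s$ vanishes to infinite order along $\Sigma$ and therefore $s \equiv 0$ by the identity theorem.

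Combining the two steps, the assignment $(\xi,\eta,a) \mapsto a^{0,1}$ is a well-defined isomorphism $\cT_{\pi_V^*A,\delta} \to \cH^1_{A,\delta}$, with inverse $\alpha \mapsto (0,0,\alpha+\bar\alpha)$. The main technical difficulty is controlling the boundary terms at the critical weight $\delta = 0$: this is where the spectral-gap choice of $\delta_0$ is essential, and where the vanishing $\cH^0_{A_\infty} = 0$ is needed to promote bounded solutions of $\delbar_A s = 0$ to genuinely exponentially decaying ones via an $\iota$-type construction, so that the integration by parts on the ACyl manifold $V$ is legitimate.
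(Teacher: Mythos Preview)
Your overall strategy agrees with the paper's: identify $\cT_{\pi_V^*A,\delta}$ with the appropriately decaying kernel of $D_A$ via Proposition~\ref{prop:T-kerD}, then run the integration-by-parts argument of Proposition~\ref{prop:kerDA-Hi}, justifying it by first showing that the $(\xi,\eta)$--components decay exponentially because $\cH^0_{A_\infty}=0$. Two points of comparison are worth making.

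First, there is no need to build an ``analogue of Proposition~\ref{prop:iota} for $\delbar_A\oplus\delbar_A^*$''. The map $\iota$ of Proposition~\ref{prop:iota} already applies to $\pi_V^*A$ on $S^1\times V$, and combined with Proposition~\ref{prop:T-kerD} it gives exactly the asymptotic-limit map on $\ker D_A$ that you want. This is what the paper does: since $\cH^0_{A_\infty}=0$ forces $\iota(\xi,\eta,a)\in\{0\}\oplus\cH^1_{A_\infty}$, the components $\xi,\eta$ decay.

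Second, your algebro-geometric argument for $\cH^0_{A,\delta}=0$ (Riemann extension across $\Sigma$, then inducting on the order of vanishing using the trivial normal bundle) is correct, but it is a detour you do not need. You have already established in your first paragraph that $\rd_A\xi=0$ and that $\xi$ decays exponentially; since $\rd_A\xi=0$ means $\xi$ is $A$--parallel, $|\xi|$ is constant, and hence $\xi=0$. The same applies to $\eta$. This one-line observation is how the paper disposes of $\cH^0_{A,\delta}=0$ (and simultaneously shows that the $(\xi,\eta)$--component of any element of $\cT_{\pi_V^*A,\delta}$ vanishes, giving the isomorphism directly). Your longer argument does buy something, though: it shows more generally that $H^0(Z,\cEnd_0(\cE)(-k\Sigma))$ is independent of $k\geq 0$ and hence vanishes, which is a pleasant algebraic fact in its own right.
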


\begin{proof}
  If $\delta\leq\delta_0$ (cf.~\autoref{prop:iota}) and $(\xi,\eta,a)\in\cT_{A,\delta}$, then $\iota(\xi,\eta,a)\in \{0\}\oplus\cH^1_{A_\infty}$.
  Hence $\xi$ and $\eta$ decay exponentially and one use can \autoref{prop:T-kerD} and argue as in the proof of \autoref{prop:kerDA-Hi}; it also follows that $\cH^0_{A,\delta}=0$.
\end{proof}

\subsection{Acyclic resolutions via forms of exponential growth/decay}

In view of the above what is missing to prove \autoref{prop:diagram} is a way to relate $\cH^1_{A,\delta}$ with the cohomology of (twists of) $\sEnd_0(\sE)$.
This is what the following result provides.

\begin{prop}
  \label{prop:resolution}
  Let $(Z,\Sigma)$ be a building block and let $V:=Z\setminus\Sigma$ be the ACyl Calabi--Yau $3$--fold constructed via \autoref{thm:hhn}.  Suppose that $\sE$ is a holomorphic vector bundle over $Z$ and suppose that $A$ is a HYM connection on $\sE$ compatible with the holomorphic structure and asymptotic to an ASD instanton on $\sE|_{\Sigma}$.

  For $\delta\in \R$ define a complex of sheaves $(\cA^\bullet_\delta,\delbar)$ on $Z$ by
  \begin{equation*}
    \cA^i_\delta(U)=\left\{\alpha\in \Omega^{0,i}\(V\cap U,\sE\) : \nabla^k \bar\pi_*\alpha = O(e^{\delta t}) ~\text{for all}~k\in\N_0 \right\}.
  \end{equation*}
  If $\delta\in\R\setminus\Z$, then the complex of sheaves $(\cA^\bullet_\delta,\delbar)$ is an acyclic resolution of $\sE\(\floor{\delta}\Sigma\)$.
  In particular, setting  $\kappa^i_\delta(\alpha):=[\alpha]$ one obtains maps
  \begin{equation*}
    \kappa^i_\delta\co\cH^i_{A,\delta} \to H^i\(\Gamma(\cA^\bullet_\delta),\delbar\)\iso H^i(Z,\sE(\floor{\delta}\Sigma)).
  \end{equation*}
\end{prop}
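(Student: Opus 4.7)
\textbf{Proof plan for Proposition~\ref{prop:resolution}.}

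The plan is to establish three statements in order: first, that $\cA^\bullet_\delta$ is a complex of fine (hence acyclic) sheaves of $C^\infty_Z$--modules on $Z$; second, that $\ker\!\bigl(\delbar \co \cA^0_\delta\to\cA^1_\delta\bigr)\cong\cE(\floor{\delta}\Sigma)$; third, that the complex is locally exact in positive degrees, so it is a resolution. The maps $\kappa^i_\delta$ are then the tautological ones induced by the isomorphism $H^i(Z,\cE(\floor\delta\Sigma))\cong H^i(\Gamma(\cA^\bullet_\delta),\delbar)$ from fine-sheaf resolution theory.

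The first step is essentially formal. The presheaf axioms and the sheaf gluing property are clear because the growth condition $\nabla^k\bar\pi_*\alpha=O(e^{\delta t})$ is local near $\Sigma$ and trivial away from $\Sigma$. Multiplication by any function in $C^\infty_Z(Z)$ preserves the growth condition, so $\cA^i_\delta$ is a sheaf of $C^\infty_Z$--modules and admits partitions of unity subordinate to any open cover of $Z$, so it is fine and therefore acyclic for $\Gamma(Z,-)$. That $\delbar\co\cA^i_\delta\to\cA^{i+1}_\delta$ is well-defined uses that $\delbar$ has bounded coefficients on the cylindrical end and commutes with the asymptotic model, so derivatives of $\delbar\alpha$ obey the same bound up to a shift in $k$.

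The second step uses the standard dictionary between the asymptotic coordinate $t$ on the cylindrical end of $V$ and a local defining function $z$ for $\Sigma\subset Z$: one has $t\sim-\log|z|$, so $e^{\delta t}\sim |z|^{-\delta}$. Hence $\delbar$--closed sections of $\cA^0_\delta$ near a point of $\Sigma$ are holomorphic sections of $\cE|_V$ on a deleted neighborhood with growth $O(|z|^{-\delta})$, and by the Riemann extension theorem applied slicewise these extend to meromorphic sections of $\cE$ with poles of order at most $m$ along $\Sigma$, where $m=\floor\delta$ (for $\delta\geq 0$; for $\delta<0$ the bound becomes vanishing of order at least $-\floor\delta$). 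The hypothesis $\delta\notin\Z$ is essential here: it ensures the growth rate $|z|^{-\delta}$ is strictly between the critical rates $|z|^{-m}$ and $|z|^{-(m+1)}$, giving the sharp identification with $\cE(\floor\delta\Sigma)$.

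The third step is the main obstacle: a local $\delbar$--Poincaré lemma with weighted growth along $\Sigma$. Concretely, for $i\geq 1$, given $\alpha\in(\cA^i_\delta)_p$ with $\delbar\alpha=0$, I need $\beta\in(\cA^{i-1}_\delta)_p$ with $\delbar\beta=\alpha$. For $p\in V$ this is classical. For $p\in\Sigma$, work in a polydisc chart $(z,w_1,w_2)$ with $\Sigma=\{z=0\}$ and solve $\delbar\beta=\alpha$ by the standard Cauchy--Green integral in the $z$--variable followed by Grauert--style bumping in $w_1,w_2$. The weighted estimate required---namely that the resulting $\beta$ also satisfies $|\nabla^k\beta|=O(|z|^{-\delta})$---follows from classical one-variable integral kernel bounds provided the exponent $-\delta$ is not an integer, since the obstruction to solvability with prescribed weight is precisely the indicial set of the translation-invariant $\delbar$ on the model end $\R_+\times S^1\times\Sigma$, which is $\Z$. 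Equivalently, one may run the argument intrinsically on the cylindrical end: the translation-invariant operator $\delbar+\delbar^*$ on the cross-section has discrete spectrum in $\Z$, so Theorem~\ref{thm:mazya-plamenevskii} and the Fredholm theory from Proposition~\ref{prop:non-critical-fredholm} yield surjectivity of $\delbar$ in $C^{k,\alpha}_\delta$ on a half-cylinder for every $\delta\notin\Z$; patching this with a compactly supported local solution near $\Sigma$ gives the germ-level solvability.

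With the resolution established, acyclicity of the $\cA^i_\delta$ (Step~1) yields the isomorphism $H^i(\Gamma(\cA^\bullet_\delta),\delbar)\iso H^i(Z,\cE(\floor\delta\Sigma))$, and the map $\kappa^i_\delta$ is defined by sending a $\delbar$--closed section to its class in the right-hand side. Injectivity and naturality of $\kappa^i_\delta$ follow from the fact that $\cH^i_{A,\delta}\subset\Gamma(\cA^i_\delta)$ consists of $\delbar$--closed sections and the quotient map respects the differential.
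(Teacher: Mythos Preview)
Your three-step plan matches the paper's proof exactly: acyclicity from the $C^\infty$--module structure, identification of $\ker\delbar$ with $\cE(\floor\delta\Sigma)$ via the dictionary $e^{\delta t}\sim|z|^{-\delta}$ and removable singularities, and a weighted $\delbar$--Poincar\'e lemma near points of $\Sigma$. Your primary approach to the third step---reducing to the one-variable Cauchy--Green problem in the $z$--direction---is precisely what the paper does: it observes that the standard proof of the $\delbar$--Poincar\'e lemma goes through provided the one-complex-variable operator $\delbar=\del_t+i\del_\alpha$ on $\R\times S^1$ is invertible on $C^\infty_\delta$, and this holds by Theorem~\ref{thm:mazya-plamenevskii} since $\spec(i\del_\alpha)=\Z$.

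One small caution about your ``equivalently'' alternative: the full operator $\delbar+\delbar^*$ on the cross-section $S^1\times\Sigma$ does \emph{not} have spectrum contained in $\Z$; the $\Sigma$--directions contribute further eigenvalues. What is true---and what the paper uses---is that after the standard Grauert reduction in the $w_1,w_2$ directions the only weighted invertibility question is the one-variable one on $\R\times S^1$, whose indicial set is $\Z$. So your first formulation is the correct and sufficient one; the second would need to be rephrased to isolate the $S^1$--factor.
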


\begin{remark}
  In \autoref{prop:resolution}, $\floor{\delta}$ denotes the largest integer not greater than $\delta$;
  in particular, $\floor{\delta}\Sigma$ is a divisor on $Z$.
\end{remark}

\begin{remark}
  We state \autoref{prop:resolution} in dimension three; however, it works \emph{mutatis mutandis} in all dimensions.
\end{remark}

\begin{proof}[Proof of \autoref{prop:resolution}]
  The proof consists of three steps.

  \setcounter{step}{0}
  \begin{step}
    The sheaves $\cA^\bullet_\delta$ are $C^\infty$--modules; hence, acyclic, see \cite[Chapter IV Corollary 4.19]{Demailly2012}.
  \end{step}

  \begin{step}
    $\sE(\floor{\delta}\Sigma)=\ker\(\delbar\co \cA^0_\delta\to\cA^1_\delta\)$.
  \end{step}

  Let $x\in Z$ and let $U\subset Z$ denote a small open neighbourhood of $x$.
  An element $s\in\ker\(\delbar\co \Gamma(U,\cA^0_\delta)\to\Gamma(U,\cA^1_\delta)\)$ corresponds to a holomorphic section of $\sE|_{V\cap U}$ such that $|z|^{-\delta} s$ stays bounded.
  Here $z$ is a holomorphic function on $U$ vanishing to first order along $\Sigma\cap U$, whose existence follows from \autoref{def:building-block}.
  Then $z^{-\floor\delta} s$ is weakly holomorphic in $U$.
  By elliptic regularity $z^{-\floor\delta} s$ extends across $U\cap \Sigma$ and thus $s$ defines an element of $\Gamma\(U,\sE(\floor\delta \Sigma)\)$.
  Conversely, it is clear that $\Gamma(U,\sE(\floor\delta \Sigma))\subset\ker\(\delbar\co\Gamma(U,\cA^0_\delta)\to\Gamma(U,\cA^1_\delta)\)$.
    
  \begin{step}
   The complex of sheaves $\(\cA^\bullet_\delta,\delbar\)$ is exact.
  \end{step}

  Away from $\Sigma$ the exactness follows from the usual $\delbar$--Poincaré Lemma.
  If $x\in\Sigma$, then since $Z$ is fibred over $\P^1$, by \autoref{def:building-block}, there exist a small open neighbourhood $U$ of $x$ in $Z$, a polydisc $D\subset \Sigma$ centred at $x$ and a biholomorphic map $\pi\co V\cap U \to \R_+\times S^1\times D$ such that the push-forward of the Kähler metric on $V\cap U$ via $\pi$ is asymptotic to the metric induced by that on $D$.
  The necessary version of the $\delbar$--Poincaré Lemma can now be proved along the lines of \cite[p.~25]{Griffiths1994} provided the linear operator
  \begin{equation*}
    \delbar\co C^\infty_\delta\Omega^0(\R\times S^1) \to C^\infty_\delta\Omega^{0,1}(\R\times S^1)
  \end{equation*}
  is invertible.
  This, however, is a simple consequence of \autoref{thm:mazya-plamenevskii} since $\delbar = \del_t +i\del_\alpha$ and the spectrum of $i\del_\alpha$ on $S^1=\R/\Z$ is $\Z$.
\end{proof}

\subsection{Proof of \autoref{prop:diagram}}

In view of \autoref{prop:T-H} we only need to establish \eqref{eq:diagram} with $\cH^1_{A,\delta}$ instead of $\cT_{\pi_V^*A,\delta}$.  
By \autoref{prop:resolution} applied to $\sEnd_0(\sE)$, we have linear maps 
\begin{equation*}
  \kappa_\delta^1\co \cH^1_{A,\delta} \to H^1\(Z,\sEnd_0(\sE)(\floor\delta \Sigma)\) \quad\text{for}~\delta \in\R\setminus \Z;
\end{equation*}
hence, linear maps 
\begin{align*}
  & \kappa_-\co \cH^1_{A,-\delta}\to H^1\(Z,\sEnd_0(\sE)(-\Sigma)\) \\
  \andq& \kappa\co \cH^1_A=\cH^1_{A,\delta}\to H^1\(Z,\sEnd_0(\sE)\)
\end{align*}
for some small $\delta>0$ making the following diagram commute:
\begin{equation*}
  \begin{tikzpicture}[baseline=(current  bounding  box.center)]
    \matrix (m) [matrix of math nodes, row sep=3.2em, column
      sep=1.6em] {
      \cH^1_{A,-\delta} & \cH^1_{A} & H^1_{A_\infty} \\
      H^1(Z,\sEnd_0(\sE)(-\Sigma)) &H^1(Z,\sEnd_0(\sE)) &
      H^1(\Sigma,\sEnd_0(\sE|_\Sigma)). \\}; \path[-stealth]
    (m-1-1) edge 
    (m-1-2) edge node [right] {$\kappa_{-}$} (m-2-1)
    (m-1-2) edge node [above] {$\iota$} (m-1-3)
    edge node [right] {$\kappa$} (m-2-2)
    (m-1-3) edge node [right] {$\iso$} (m-2-3)
    (m-2-1) edge (m-2-2)
    (m-2-2) edge (m-2-3);
  \end{tikzpicture}
\end{equation*}
The map $\kappa_-$ is injective, because if $\kappa_- a=0$, then
$a=\delbar s$ for some $s\in\Gamma(Z,\cA^0_{-\delta})$ and thus
\begin{equation*}
  \int_V \|a\|^2 = \int_V \<a,\delbar s\> = \int_V \<\delbar^* a,s\> = 0.
\end{equation*}
Since $H^0(\Sigma,\sEnd_0(\sE|_\Sigma))=0$, the first map on the bottom is injective and because the rows are exact a simple diagram chase proves shows that $\kappa$ is injective. 
\qed

%%% Local Variables: 
%%% mode: latex
%%% TeX-master: "G2InstantonsTCSGluingTheorem"
%%% End:

\printreferences

\end{document}

%%% Local Variables:
%%% mode: latex
%%% TeX-master: "G2InstantonsTCSGluingTheorem"
%%% End: